\newtheorem{thm}{Theorem}[section]
\newtheorem{lma}[thm]{Lemma}
\newtheorem{prop}[thm]{Proposition}
\newtheorem{cor}[thm]{Corollary}
\newtheorem*{ThmA}{Theorem A}
\newtheorem*{ThmB}{Theorem B}
\theoremstyle{definition}
\newtheorem{defn}[thm]{Definition}
\newtheorem{Alg}[thm]{Algorithm}
\newtheorem{rmk}[thm]{Remark}
\newtheorem{ex}[thm]{Example}
\newcommand{\W}{W_{\mathrm{Dhar}}}
\newcommand{\md}{\underline{d}}
\newcommand{\me}{\underline{e}}
\newcommand{\val}{\operatorname{val}}
\newcommand{\vall}{\operatorname{val}_{\mathrm{wl}}}
\newcommand{\Pic}{{\operatorname{{Pic}}}}
\newcommand{\Div}{{\operatorname{{Div}}}}
\newcommand{\dv}{{\operatorname{{div}}}}
\newcommand{\RR}{\mathbb{R}}
\newcommand{\ZZ}{\mathbb{Z}}
\newcommand{\mk}{\underline{k}}
\title{Special divisors in special divisor classes on graphs} 
\author{Karl Christ}
\thanks{The author was partially supported by NSF FRG grant DMS–2053261 and is member of the INdaM group GNSAGA.}
\address[Christ]{Dipartimento di Matematica\\
	Università di Torino\\Via Carlo Alberto 10 \\10123 Turin\\  Italy }\email{karl.christ@unito.it}
\begin{document}

\begin{abstract}
We generalize the notion of divisors that are reduced with respect to a vertex to divisors that are reduced with respect to a set of vertices. We establish properties that remain valid in this more general context which allow us to improve the standard algorithm to determine whether a divisor class is effective.  
We then characterize reduced divisors in our sense in terms of Luo's potential-theoretic generalization of reduced divisors. Finally, as a further application, we use this setup to study the existence of so called uniform representatives in special divisor classes.
\end{abstract}

\maketitle

\section{Introduction}
The study of divisors on algebraic curves is one of the classic topics of algebraic geometry. More recently, a divisor theory for finite graphs has been developed that shares striking similarities with the theory for algebraic curves. Starting with \cite{BN}, these analogies have been studied extensively and partially explained; see, for example, \cite{MZ08, Baker, CV10, AC13, CLM, AP20, FJP, MUW, CPS}.

Concretely, a divisor $\md$ on a vertex-weighted graph $G$ is simply a $\mathbb Z$-linear combination of vertices of $G$. Two divisors on $G$ are called linearly equivalent if one can be obtained from the other by a series of so-called chip-firing moves. Thinking about $\md$ as a collection of chips on each vertex, a chip-firing move based at a vertex $v$ is given by moving one chip of $\md$ along each edge adjacent to $v$.  

For a vertex $v$ of $G$ there exists a unique $v$-reduced divisor in any linear equivalence class. Roughly speaking, it is a divisor with the minimal possible non-negative number of chips away from $v$. These representatives have been introduced already in \cite{BN} and remain one of the main tools in studying the Baker-Norine rank of divisors on graphs. One of the reasons for their usefulness in this context is that a divisor class contains an effective representative, i.e. one with all values non-negative, if and only if the $v$-reduced representative is effective.

In the current paper, we generalize the notion of $v$-reduced divisors by replacing the single vertex $v$ with a subset of vertices $V$. We show that some of the properties that make $v$-reduced divisors so useful generalize to this more general setting, and use them to improve the standard algorithm due to Baker and Shokrieh \cite{BS13} to find effective representatives in chip-firing classes or certify that no such representatives exist. We then give a precise relation of our notion with a different generalization of $v$-reduced divisors due to Luo \cite{L13}. Finally, we use $V$-reduced representatives to study the existence of so called uniform representatives in special divisor classes, that are used in the subsequent paper \cite{BCM} to give algebro-geometric applications. 

\subsection*{Main results}

Our main object of interest in this paper is the following generalization of $v$-reduced divisors (introduced independently by Myrla Barbosa \cite{B21} in her PhD thesis): 

\begin{defn}
    Let $V$ be a set of vertices of a graph $G$. A divisor $\md$ is $V$-reduced if it is effective away from $V$ and a chip-firing move along any set of vertices $W$ with $W \cap V = \emptyset$ turns $\md$ negative on at least one vertex in $W$.  
\end{defn}

Different from $v$-reduced divisors, $V$-reduced divisors need not be unique in their divisor class, and it is also not true that a divisor class contains an effective representative if and only if the $V$-reduced representatives are effective. 
But it is easy to see that every divisor class contains a $V$-reduced representative for any $V$ (Lemma~\ref{lma: existence}). And the following proposition gives a relation with effective representatives, see Proposition~\ref{prop:Dhar decomp effective}:

\begin{prop} \label{prop:main}
Let $\md$ be a divisor on $G$ that is not effective but whose linear equivalence class $\delta$ is effective. Let $V \subset V(G)$ be the set of vertices, on which $\md$ has negative coefficients. Then \[\W(\md, V) \neq \emptyset.\]     
\end{prop}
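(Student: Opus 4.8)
\emph{Proof strategy.} The plan is to show that $\md$ is not $V$-reduced, which is equivalent to $\W(\md,V)\neq\emptyset$: the output of Dhar's burning algorithm run from the source $V$ is empty precisely when $\md$ is $V$-reduced, and in general that output contains every ``legal'' firing set, i.e.\ every nonempty $W\subseteq V(G)\ssm V$ along which the chip-firing move keeps $\md$ nonnegative on $W$. So it suffices to exhibit one such $W$, and the point is that $W$ can be read off from any effective representative.

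Fix an effective divisor $\me\in\delta$ and write $\me=\md+\dv(f)$ for an integer-valued function $f$ on $V(G)$, with the convention $\dv(f)(v)=\sum_{e=vw}\bigl(f(w)-f(v)\bigr)$, so that the chip-firing move along a set $W$ replaces $\md(w)$ by $\md(w)-\outdeg_W(w)$ for $w\in W$, where $\outdeg_W(w)$ is the number of edges from $w$ to $V(G)\ssm W$. Let $M=\max_u f(u)$ and put $W=\{\,v: f(v)=M\,\}$, which is nonempty. I will check that $W\cap V=\emptyset$ and that firing $W$ does not make $\md$ negative on $W$.

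\emph{$W$ avoids $V$.} If $v\in V$, then $\md(v)<0\le\me(v)$, so $\dv(f)(v)=\me(v)-\md(v)>0$; but if $f(v)=M$ then $f(w)\le f(v)$ for every neighbour $w$ of $v$, whence $\dv(f)(v)\le 0$, a contradiction. \emph{Firing $W$ is legal.} Fix $w\in W$. Each edge $e=wx$ with $x\notin W$ has $f(x)\le M-1$, so
\[
\dv(f)(w)\;=\;\sum_{\substack{e=wx\\ x\notin W}}\bigl(f(x)-M\bigr)\;\le\;-\outdeg_W(w).
\]
Hence $0\le\me(w)=\md(w)+\dv(f)(w)\le\md(w)-\outdeg_W(w)$, and since $\md(w)-\outdeg_W(w)$ is the value at $w$ after the chip-firing move along $W$, that move leaves $\md$ nonnegative on all of $W$ (and effective away from $V$, as vertices outside $W$ only gain chips). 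So $W$ is a legal firing set disjoint from $V$, $\md$ is not $V$-reduced, and $W\subseteq\W(\md,V)\neq\emptyset$.

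The one place requiring care is the passage from ``$W$ is a legal firing set disjoint from $V$'' to ``$W\subseteq\W(\md,V)$'' — the natural generalization of the single-source statement, proved by induction on the burning from $V$: no $w\in W$ can be the first vertex of $W$ to ignite, since the number of its already-burnt neighbours is at most $\outdeg_W(w)\le\md(w)$. I expect this bookkeeping, rather than the short estimate above, to be the main obstacle. Note in particular that, unlike the usual arguments for $v$-reduced divisors, no sequential ordering of firing moves is needed here, because the bound $\dv(f)(w)\le-\outdeg_W(w)$ holds at once for the entire top level set $W$ of $f$.
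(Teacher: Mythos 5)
Your proof is correct and follows essentially the same route as the paper: the paper also takes the top level set $F_k=\{v: f(v)=\max f\}$ of the function $f$ with $\me=\md+\dv(f)$, checks that it is disjoint from $V$ and that firing it keeps $\md$ effective away from $V$, and concludes via the equivalence ``$\md$ is $V$-reduced $\iff$ $\W(\md,V)=\emptyset$'' (Lemma~\ref{lma: decomp reduced}). The only cosmetic difference is that the paper phrases the slope estimate at vertices of $F_k$ via the decomposition $f=\sum_i \mathbb 1_{F_i}$ into superlevel sets, while you estimate $\dv(f)(w)\le-\outdeg_W(w)$ directly; your explicit burning argument for ``legal firing set $\Rightarrow$ contained in $\W(\md,V)$'' correctly fills in what the paper dispatches with ``by construction of $\W(\md,V)$''.
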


Here $\W(\md, V)$ denotes the Dhar decomposition of $G$ with respect to $\md$ and $V$ which we introduce in Section~\ref{subsec:Dhar}. Roughly speaking, it is the maximal set of vertices along which a chip-firing move of $\md$ does not create any new negative values for $\md$. 

We use Proposition~\ref{prop:main} to give a new algorithm to produce an effective divisor linearly equivalent to a given divisor $\md$, or to certify that no such effective divisor exists. It is a modification of the commonly used algorithm due to Baker and Shokrieh \cite[Algorithm 4]{BS13}, which skips the step in which a divisor is constructed that is effective away from $v$ and instead directly proceeds by calculating successively $V_i$-reduced representatives for a decreasing chain of sets of vertices $V_i$ obtained by chip-firing along the $\W(\md, V_i)$; see Algorithm~\ref{algorithm} for details. 

Next, we compare the notion of $V$-reduced divisors to a different generalization of $v$-reduced divisors introduced by Luo \cite{L13} that generalizes the potential-theoretic characterization of $v$-reduced divisors given in \cite{BS13}, see Theorem~\ref{thm: equivalence reducedness}:

\begin{ThmA}
    A divisor $\md$ of degree $d$ is $V$-reduced if and only if there exists an $\mathbb R$-divisor $\mathcal E$ of degree $d$ supported on $V$, such that $\md$ is $\mathcal E$-reduced in the sense of Luo.
\end{ThmA}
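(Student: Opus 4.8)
The approach is to translate both sides into the language of Dhar-type burning. Recall that Luo's potential-theoretic notion \cite{L13} refines the burning-algorithm description of $v$-reduced divisors: given an $\mathbb R$-divisor $\mathcal E$ of degree $d$, one lets a ``fire'' emanate from $\mathcal E$ and spread continuously through $G$, a vertex $v$ igniting once the amount of fire arriving along its edges exceeds $\md(v)$; then $\md$ is $\mathcal E$-reduced exactly when this process burns all of $G$ and no nonempty set of vertices survives as a legal $\mathcal E$-firing. Equivalently, by \cite{BS13, L13}, $\md$ is $\mathcal E$-reduced iff it minimizes the energy $\|D-\mathcal E\|^2$ (with respect to the Green's-function pairing) over its linear equivalence class. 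I would use whichever description is more convenient for each implication.

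\emph{The implication ``$\mathcal E$-reduced $\Rightarrow$ $V$-reduced''.} Since $\mathcal E$ is supported on $V$, it vanishes on $V(G)\setminus V$, and there $\mathcal E$-reducedness forces $\md\geq 0$; thus $\md$ is effective away from $V$. Now let $W$ be a set of vertices with $W\cap V=\emptyset$ and suppose the chip-firing move along $W$ leaves $\md$ non-negative on $W$, i.e. $\md(v)\geq \outdeg_W(v)$ for all $v\in W$. As $W$ is disjoint from $\supp(\mathcal E)$, in Luo's process the fire can reach a vertex $v\in W$ only across edges leaving $W$; the displayed inequalities say precisely that no vertex of $W$ ever ignites, so $W$ is a nonempty legal $\mathcal E$-firing, contradicting that $\md$ is $\mathcal E$-reduced. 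Hence every such $W$ goes negative somewhere, and $\md$ is $V$-reduced.

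\emph{The implication ``$V$-reduced $\Rightarrow$ $\mathcal E$-reduced''.} This is the substantive direction, since one must actually build $\mathcal E$. I would first contract $V$ to a single vertex $q$, obtaining a graph $G_q$ (vertex weights and loops are irrelevant for chip-firing) on which $\md$ induces a divisor $\md'$ with $\md'(q)=\sum_{v\in V}\md(v)$ and $\md'|_{V(G_q)\setminus\{q\}}=\md|_{V(G)\setminus V}$. One checks directly that firing sets of $G$ disjoint from $V$ correspond to firing sets of $G_q$ avoiding $q$ with the same boundary effect, so $\md$ is $V$-reduced in $G$ if and only if $\md'$ is $q$-reduced in $G_q$. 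By the potential-theoretic characterization of $v$-reduced divisors (\cite{BS13}, the case $\mathcal E=d\cdot q$ of \cite{L13}), $\md'$ is then $(d\cdot q)$-reduced on $G_q$. It remains to spread the charge $d\cdot q$ back over $V$: I would run the multi-source burning algorithm on $G$ from the set $V$ (which terminates with all of $G$ burnt, since $\md$ is $V$-reduced) and choose real weights $a_v$ for $v\in V$ with $\sum_{v\in V}a_v=d$ so that the burning times and absorbed amounts agree with those of Luo's continuous process started from $\mathcal E=\sum_{v\in V}a_v\,v$. The conditions on the $a_v$ are linear (in)equalities dictated by these burning times together with the values $\md(v)$ for $v\in V$; the total is already correct because $\sum_{v\in V}\md(v)=d-\deg(\md|_{V(G)\setminus V})$, and I expect $V$-reducedness to be exactly the hypothesis that makes the system solvable.

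The main obstacle is this last step. Because $V$-reduced divisors are not unique in their class, $\mathcal E$ cannot be canonical in $G$ and $V$ alone; it must be matched to the given $\md$, and one has to show there is enough room in distributing the degree $d$ over $V$ to reproduce $\md|_V$ exactly while the part of $\md$ off $V$ stays the one forced by the burning. Concretely, the crux is to prove that the relevant linear feasibility problem has a solution, equivalently that $\mathcal E$-reducedness is compatible with the contraction $G\to G_q$ when $\mathcal E$ is supported on $V$, and to control how the $V$-part of the $\mathcal E$-reduced divisor varies as $\mathcal E$ moves within the simplex of degree-$d$ $\mathbb R$-divisors on $V$. Once that is in place, the forward implication and the effectiveness statement above are routine.
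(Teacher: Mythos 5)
There is a genuine gap, and it sits exactly where you flag it, but it is worse than a mere feasibility question. Your whole framework rests on the asserted equivalence ``$\md$ is $\mathcal E$-reduced iff no nonempty set of vertices survives as a legal $\mathcal E$-firing.'' For sets disjoint from $\supp(\mathcal E)$ this is only the easy half of the story: Example~\ref{ex:integer E} of the paper exhibits $\md=(1,3,2)$ on a $3$-vertex graph with $V=\{v_1,v_3\}$ which is $V$-reduced (so \emph{no} set disjoint from $V$ can be legally fired), yet fails to be $\mathcal E$-reduced for $\mathcal E=(3,0,3)$ because firing the set $\{v_2,v_3\}$ --- which \emph{meets} $V$ --- strictly decreases the energy. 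So $\mathcal E$-reducedness genuinely depends on how the degree $d$ is distributed over $V$, and is strictly stronger than ``every firing avoiding $V$ goes negative.'' Consequently your forward implication survives (it is essentially the paper's argument: a legal firing of a set not containing all of the minimum locus of the potential $q_{\mathcal E}(\md')$ strictly lowers $\sum_v q_{\mathcal E}(\md')(v)$, which is Corollary~\ref{cor:chip firing dynamics} combined with Lemma~\ref{lma:zero set}; you should prove rather than assume this), but the reverse implication cannot follow from any burning criterion that only tests sets disjoint from $V$.

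For the substantive direction, your contraction $G\to G_q$ correctly shows that $\md$ is $V$-reduced iff the pushforward is $q$-reduced, but this only recovers facts already known about $V$-reduced divisors; it says nothing about which $\mathcal E$ on $V$ (if any) makes $\md$ $\mathcal E$-reduced, precisely because contraction destroys the information of how $\mathcal E$ is split among the vertices of $V$. The missing content is twofold. First, one must actually construct $\mathcal E$: the paper does this in Lemma~\ref{lma:R divisor} by producing, via harmonic interpolating functions (Lemma~\ref{lma: interpolating function}) and a continuity/induction argument, an $\RR$-divisor $\mathcal E$ supported on $V$ for which the potential $q_{\mathcal E}(\md)$ attains its minimum value $0$ on \emph{all} of $V$ simultaneously --- this is the precise analogue of your ``linear feasibility problem,'' and it is not a formality (it is also why $\mathcal E$ generally cannot be taken integral). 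Second, even with this $\mathcal E$ in hand one still has to rule out competitors $\md'\sim\md$ that differ from $\md$ by firings meeting $V$; the paper does this by decomposing the normalized potential difference $f$ into level sets $F_i$ and showing that if $U_0(f)$ misses $U_0(q_{\mathcal E}(\md))\supset V$, then the top level set $F_k$ is a legal firing disjoint from $V$, contradicting $V$-reducedness. Neither step is present in your proposal, so as written the ``only if'' direction is not proved.
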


Here an $\mathbb R$-divisor is simply a divisor with values in $\mathbb R$ instead of $\mathbb Z$. An $\mathcal E$-reduced divisor on the other hand is a divisor which, roughly speaking, has minimal distance to $\mathcal E$ in its linear equivalence class; see Section~\ref{sec:potential} for details.  

Finally, as an application, we study the existence of so called uniform divisors in linear equivalence classes of effective divisors whose residual is also effective. To this end, we need to introduce some notation. Recall that the canonical divisor $\mk \in \omega$ is given by \[\mk_v = 2g_v - 2 + \val(v),\] where $g_v$ is the weight of the vertex $v$ and $\val(v)$ its valence. We denote the linear equivalence class of $\mk$ by $\omega$.

Given a divisor $\md$ in the equivalence class $\delta$, its \emph{residual divisor} is defined as $\md^* \coloneqq \mk - \md$. Similarly, its \emph{residual class} is $\delta^* \coloneqq \omega - \delta$.
A divisor class $\delta$ is called \emph{special} if both $\delta$ and its residual class $\delta^* \coloneqq \omega - \delta$ are effective. That is, both $\delta$ and $\delta^*$ need to contain a divisor that has non-negative value on each vertex.
If both $\md$ and its residual $\md^*$ are effective, we call $\md$ a \emph{uniform divisor}. Concretely, this means a divisor $\md$ whose value $\md_v$ on any vertex $v \in G$ satisfies \begin{equation}\label{eq:uniform} 0 \leq \md_v \leq 2 g_v - 2 + \val(v).\end{equation}

If $\md$ is uniform, clearly the class $\delta$ of $\md$ is special. The converse does not hold, and there are special classes which contain no uniform representatives (see Example~\ref{ex:uniform}). 
We show however, that in any special class there are representatives that are almost uniform in the following sense (see Theorem~\ref{thm:main}):

\begin{ThmB} \label{thm: existence special}
    Let $G$ be a graph and $\delta$ a special divisor class. Then $\delta$ contains a representative $\md$ such that for all vertices $v \in V(G)$ we have \begin{equation} -1 \leq \md_v \leq 2g_v - 2 + \val(v).\end{equation}
    Furthermore, $\md_v = -1$ is only possible if $g_v = 0$ and there is no loop adjacent to $v$.
    \end{ThmB}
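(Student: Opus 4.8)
The plan is to start with a $V$-reduced representative of $\delta$ for a well-chosen set $V$, and then exploit the residual class. Let $\md$ be a $V$-reduced divisor in $\delta$ where $V$ is the set of vertices on which some effective representative of $\delta$ is positive; more pragmatically, I would pick an effective representative $\md_0$ of $\delta$, let $V = \supp(\md_0)$, and take the $V$-reduced representative $\md$ of $\delta$ obtained from $\md_0$ (this exists by Lemma~\ref{lma: existence}, and since $\md_0$ is already effective away from $V$, the reduction process keeps it so). Then $\md$ is effective away from $V$, so the only vertices where $\md$ could be negative lie in $V$. The goal is to show $\md_v \geq -1$ on $V$, with equality only in the stated degenerate case, and simultaneously that $\md_v \leq \mk_v$ everywhere. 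The second bound should come from applying the same argument to the residual class $\delta^*$: its residual representative $\md^* = \mk - \md$ must also be controllable, and $\md_v \leq \mk_v$ on a vertex $v$ is exactly $\md^*_v \geq 0$ there.

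The core of the argument is a local contradiction at a vertex $v \in V$ where $\md_v \leq -2$ (or $\md_v = -1$ with $g_v \geq 1$ or a loop at $v$). The idea is to combine $V$-reducedness of $\md$ with effectivity of $\delta^*$. Since $\delta^*$ is effective, fix an effective representative $\me$ of $\delta^*$; then $\md + \me \sim \mk$, so $\md + \me$ lies in the canonical class. Now $\mk_v = 2g_v - 2 + \val(v)$, and if $\md_v \leq -2$ then $\me_v = \mk_v - \md_v + (\text{correction from chip-firing}) $ — but one has to be careful because $\md + \me$ need not equal $\mk$ on the nose. The cleaner route: apply Proposition~\ref{prop:main} / the Dhar decomposition $\W(\md, V)$ to certify that if $\md$ were ``too negative'' at $v$, one could fire a set $W$ disjoint from $V$... no — $v \in V$, so that does not immediately help. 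Instead I expect the right move is to run Dhar's burning algorithm on the residual side: consider $\md^* = \mk - \md$; on $v$ we have $\md^*_v = \mk_v - \md_v \geq \val(v)$ when $\md_v \leq -2 + 2g_v$, which is a large pile. Since $\delta^*$ is effective, take its $V$-reduced (or $v$-reduced) representative $\me$; by the characterization of reducedness via Dhar, the number of chips $\me$ can shed toward $v$ is bounded by $\val(v)$ along each firing. The contradiction should be extracted by showing that the ``excess'' at $v$ forces $\md$ to admit a chip-firing move along a set $W$ disjoint from $V$ that keeps it effective on $W$, contradicting $V$-reducedness.

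More concretely, here is the mechanism I would push through. Suppose $\md_v \leq -2$ for some $v \in V$. Consider the divisor $\md^* = \mk - \md$. Run Dhar's burning algorithm for $\md^*$ starting the fire at $v$: since $\md^*_v = \mk_v - \md_v \geq \val(v) + (2g_v - 2 + 2) \geq \val(v)$, every edge at $v$ burns, so the fire spreads. Let $W$ be the set of vertices that do \emph{not} burn; if $W \neq \emptyset$, firing $W$ for $\md^*$ keeps it effective on $W$ — which, translated back, says firing $V(G) \setminus W$ for $\md$... and I need this firing set to be disjoint from $V$. This is where the choice of $V$ as the support of an effective representative matters: I should argue that the burnt region, or its complement, is forced to avoid $V$ up to adjusting $\md$ within its class — precisely because on $V(G)\setminus V$ the divisor $\md$ is already effective and large enough piles propagate the fire. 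The final degenerate case $\md_v = -1$: here $\mk_v = 2g_v - 2 + \val(v)$, and if $g_v = 0$ with no loop, then $\mk_v = \val(v) - 2$, so $\md^*_v = \mk_v + 1 = \val(v) - 1 < \val(v)$, meaning the fire at $v$ for $\md^*$ need \emph{not} spread across all edges — so the obstruction argument legitimately fails, consistent with $\md_v = -1$ being permitted exactly there; whereas if $g_v \geq 1$ or there is a loop, $\md^*_v \geq \val(v)$ and the argument goes through to force $\md_v \geq 0$.

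The main obstacle I anticipate is making the ``firing set disjoint from $V$'' step rigorous: one needs to iterate, replacing $V$ and $\md$ by a smaller set and a new representative each time the above produces a valid firing (mirroring Algorithm~\ref{algorithm} and Proposition~\ref{prop:main}), and argue that this process terminates at a representative meeting both bounds. Ensuring simultaneously that $\md_v \leq \mk_v$ everywhere may require interleaving reductions for $\delta$ and $\delta^*$, or observing that the $V$-reduced property on the $\delta$-side already forces $\me = \mk - \md$ to be effective away from $V$ (so $\md \leq \mk$ off $V$) and then handling $V$ by the symmetric local analysis. Getting these two one-sided bounds to hold at the same representative — rather than at two different ones — is the delicate bookkeeping I would budget the most care for.
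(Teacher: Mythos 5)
Your proposal correctly identifies the two main ingredients (pass to the residual class to trade the upper bound for a lower bound, and use Dhar decompositions / $V$-reducedness to push towards effectivity), but it does not close the argument, and the step you yourself flag as ``the delicate bookkeeping'' is in fact the entire content of the theorem. Concretely: (1) Your starting point does not do what you want. If $\md_0$ is effective and $V=\supp(\md_0)$, then every Dhar firing $\W(\md_0,V)$ is disjoint from $V$, so vertices of $V$ only \emph{gain} chips; the resulting $V$-reduced representative is still effective, the lower bound is vacuous, and all the difficulty has migrated to the upper bound $\md_v\le \mk_v$ --- which your local analysis at vertices with $\md_v\le -2$ never touches. (2) The local mechanism is not sound as stated: in Dhar's burning algorithm rooted at $v$, the fire always crosses every edge at $v$ regardless of the value $\md^*_v$, so the inequality $\md^*_v\ge \val(v)$ plays no role in whether the fire spreads from $v$, and the needed firing set disjoint from $V$ is never actually produced. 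Relatedly, the claim that $V$-reducedness of $\md$ ``already forces $\mk-\md$ to be effective away from $V$'' is false --- there is no such implication. Your heuristic for the loop case is also off: with $g_v=0$ and $\md_v=-1$ one has $\md^*_v=\val(v)-1<\val(v)$ whether or not there is a loop at $v$.

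The missing idea is to order the two steps so that one bound is free and then prove that the reduction process \emph{preserves} it. The paper proves the residual statement: start from $\me=\mk-\me^*$ with $\me^*$ an effective representative of $\delta^*$, so the upper bound $\me_v\le 2g_v-2+\val(v)$ holds at the outset; then run Algorithm~\ref{algorithm}, firing successively along the Dhar sets $\W(\me,V)$ until an effective representative is reached (this terminates in an effective divisor precisely because $\delta$ is effective, by Proposition~\ref{prop:Dhar decomp effective}). The real work is a case analysis showing each such firing preserves the weakened bound $\me_v\le\max\{\vall(v)-1,\,2g_v-2+\val(v)\}$: the key quantitative input is that a vertex $v\notin V\cup\W(\me,V)$ adjacent to $\W(\me,V)$ satisfies $\me_v\le k^c-1$ by the very construction of the Dhar decomposition (where $k^c$ is the number of non-loop edges from $v$ to the complement of $\W(\me,V)$), so after receiving $k=\vall(v)-k^c$ chips it is still $\le\vall(v)-1$. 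Translating back through $\md=\mk-\me$ yields exactly the $-1\le\md_v$ statement with the loop/weight caveat. Without an invariant of this kind that is demonstrably preserved by each firing, your iteration has no reason to terminate at a representative satisfying both bounds simultaneously.
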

In particular, if every vertex of $G$ with weight $0$ is adjacent to a loop, then every special divisor class contains a uniform representative. 

While the theorem is purely combinatorial, our original motivation for studying this question is algebro-geometric: Together with results of \cite{Clifford}, Theorem A allows to construct explicit Clifford representatives for a large class of graphs $G$; that is, divisors $\md \in \delta$, such that every line bundle of multidegree $\md$ satisfies the classical Clifford inequality. We explore this and related questions in \cite{BCM}. 


\medskip

\noindent 
{\bf Acknowledgements.} I would like to thank Sam Payne for pointing out Luo's work on $\mathcal E$-reduced divisors, and Matt Baker for comments on an earlier version of the paper. The notion of $V$-reduced divisors was introduced independently by Myrla Barbosa in her PhD thesis \cite{B21}, which was pointed out by Margarida Melo. I'm grateful to both of them for helpful discussions related to this circle of ideas. Finally, I would like to thank an anonymous referee for pointing out the alternative proof of Theorem B using partial orientations explained in Remark~\ref{rmk:partial orientations}.

\section{Divisors and linear equivalence}
\label{sec:divisors}

In this section, we recall some standard notions regarding graphs and divisors on them. 

\subsection{Graphs} Throughout, $G$ will denote a connected, vertex-weighted graph, where we allow for multiple edges and loops. The weight of a vertex $v$ of $G$ is denoted by $g_v$.
The set of edges and vertices of $G$ are denoted by $E(G)$ and $V(G)$, respectively. 

For a subset of vertices $V \subset V(G)$ the \emph{subgraph induced by} $V$ is the subgraph of $G$ with vertex set $V$ and edge set consisting of all the edges of $G$ between vertices in $V$. 

The \emph{genus} of a graph $G$ is defined as \[g = |E(G)| - |V(G)| + 1 + \sum_{v \in V(G)} g_v. \]

Given a vertex $v \in V(G)$, the \emph{valence} $\val(v)$ of $v$ is defined as the number of edges adjacent to $v$ with loop edges based at $v$ counted twice. 

\subsection{Divisors} Let $A = \ZZ$ or $A = \RR$. The group of $A$-\emph{divisors} $\Div_A(G)$ on $G$ is the group of formal linear combinations \[\md = \sum_{v \in V(G)} \md_v \cdot v,\]
with $\md_v \in A$. Clearly, any $\ZZ$-divisor is also an $\RR$-divisor.

Except for Section~\ref{sec:potential}, we will only consider $A = \ZZ$. To simplify notation, we will call $\ZZ$-divisors just divisors and set $\Div(G) = \Div_{\ZZ}(G)$.

The \emph{degree} of an $A$-divisor $\md$ is $\deg(\md) = \sum_{v \in V(G)} \md_v.$ 
An $A$-divisor on $G$ is called \emph{effective}, if its value $\md_v$ is non-negative on each vertex $v$ of $G$. Given a subset of vertices $V \subset V(G)$, we say that an $A$-divisor $\md$ is effective away from $V$ if $\md_v \geq 0$ for all $v \not \in V$. The \emph{support} of an $A$-divisor $\md$ is the set of vertices $v$ on which $\md_v \neq 0$. 

\subsection{Linear equivalence} Next, we describe a well-known 
equivalence relation of divisors on $G$.

We denote by $C^0(G; A)$ the set of $A$-valued functions on vertices of $G$. To any function in $C^0(G; A)$, we can associate an $A$-divisor in $\Div_A(G)$, as follows. 

Given a function $f \in C^0(G; A)$, and a vertex $v \in V(G)$, the $A$-divisor $\dv(f)$ associated to $f$ has value at $v$ equal to the sum of the outgoing slopes of $f$ at $v$, \[\mathrm{div}(f)_v = \sum_{e = vw} f(w) - f(v), \]
where the sum runs over all edges adjacent to $v$. An $A$-divisor of the form $\mathrm{div}(f)$ has always degree $0$ and is called a principal divisor. Two functions $f,g \in C^0(G; A)$ have the same associated $A$-divisor if and only if one function is obtained from the other by adding the same constant at each vertex.

Two divisors $\md, \me \in \Div(G)$ are called \emph{linearly equivalent}, written $\md \sim \me$, if \[\md - \me = \dv(f)\] for some $\ZZ$-valued function $f \in C^0(G; \ZZ)$. 

The set of equivalence classes under this relation is denoted by $\Pic(G)$. Since principal divisors have degree $0$, the classes $\delta \in \Pic(G)$ have a well-defined degree given by the degree of its representatives. We call a class $\delta \in \Pic(G)$ \emph{effective}, if $\delta$ contains an effective representative.   

For a subset of vertices $V \subset V(G)$ we denote by $\mathbb 1_{V}$ the characteristic function of $V$; that is, $\mathbb 1_V$ has value $1$ on all vertices in $V$ and value $0$ on all other vertices. Given a divisor $\md \in \Div(G)$, we say that the linearly equivalent divisor \[\md' = \md + \mathrm{div}(\mathbb 1_{V})\]
is obtained from $\md$ by a \emph{chip-firing move along} $V$. Concretely, $\md'_v = \md_v + k$ where $k$ is the number of edges adjacent between $v$ and vertices in $V$ if $v \not \in V$; and $k$ is minus the number of edges between $v$ and vertices not in $V$ if $v \in V$. Two divisors $\md$ and $\me$ are linearly equivalent if and only if one can be obtained from the other via a series of chip-firing moves.

\section{Dhar decomposition and reduced divisors} \label{sec:Dhar}

In this section, we introduce $V$-reduced divisors for a set of vertices $V \subset V(G)$. They generalize the well-known $v$-reduced divisors, where $v$ is a single vertex. An important tool in their study is the Dhar decomposition, which we again generalize from a single vertex to a set of vertices.

We will assume throughout that the subset of vertices $V \subset V(G)$ is non-empty, without mentioning it each time. 

\subsection{The Dhar decomposition}\label{subsec:Dhar} 
Let $G$ be a graph, $V \subset V(G)$ a set of vertices, and let $\md$ be a divisor effective away from $V$. We define a sequence of subsets of vertices \begin{equation} \label{eq:Dhar decomposition}
    V = V_0 \subset V_1 \subset \ldots \subset V_n,
\end{equation}
iteratively as follows. Let $W_i = V(G) \setminus V_i$ be the complement of $V_i$, and \[\md' = \md + \dv(\mathbb 1_{W_i}),\] the divisor obtained from $\md$ by a chip-firing move along $W_i$. Then $V_{i+1}$ is obtained from $V_i$ by adding those vertices in $V(G) \setminus V_i$ on which $\md'$ is negative. Since there are only finitely many vertices, at some point this process stabilizes and we have $V_{n} = V_{n+1}$. We set \[\W(\md, V) \coloneqq V(G) \setminus V_n\] and call \[V(G) = V_n \sqcup \W(\md, V)\] the \emph{Dhar decomposition of $G$ with respect to $V$ and $\md$.} By construction, the divisor obtained from $\md$ via a chip-firing move along $\W(\md, V)$ is still effective away from $V$.

\medskip

If $V$ contains a single vertex $v$, the above construction gives the usual Dhar decomposition of $G$ with respect to $v$ and $\md$ (see, for example, \cite[\S 3.4]{CLM}). On the other extreme, if $V = V(G)$ we have trivially $\W(\md, V) = \emptyset$  for any divisor $\md$.

\subsection{\texorpdfstring{$V$}{V}-reduced divisors} \label{subsec:reduced}

Reduced divisors are particularly convenient representatives in each divisor class on $G$. Originally, they were defined by fixing a single vertex $v$ \cite[\S 3]{BN}, and we again generalize the notion to arbitrary subsets of vertices.  

\begin{defn}
    Let $V \subset V(G)$. A divisor $\md \in \Div(G)$ is called $V$-reduced, if it is effective away from $V$ and for any subset of vertices $W \subset V(G) \setminus V$ a chip-firing move based at $W$ turns $\md$ negative on at least one of the vertices in $W$. 
\end{defn}

If $V$ contains a single vertex $v$, this definition coincides with the one for $v$-reduced divisors. On the other extreme, if $V = V(G)$ any divisor is $V$-reduced.

\medskip

We next study how the basic properties of $v$-reduced divisors generalize to this setting:

\begin{lma} \label{lma: decomp reduced}
    Let $V \subset V(G)$ be a subset of vertices and $\md \in \Div(G)$ a divisor that is effective away from $V$. Then $\md$ is $V$-reduced if and only if $\W(\md, V) = \emptyset$.
\end{lma}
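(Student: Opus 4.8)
The plan is to prove the two implications separately, both essentially by unwinding the definitions and tracking what a chip-firing move along a set does.

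\medskip

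\textbf{The easy direction: $\W(\md,V) = \emptyset \Rightarrow \md$ is $V$-reduced.} Suppose $\md$ is effective away from $V$ but not $V$-reduced. Then by definition there is a nonempty $W \subset V(G) \setminus V$ such that $\md' = \md + \dv(\mathbb{1}_W)$ stays non-negative on all vertices of $W$ (it automatically stays non-negative on $V(G) \setminus (V \cup W)$ since firing along $W$ only increases values there, and $\md$ was already non-negative away from $V$). I would then argue that this single $W$ already "survives" the Dhar process, i.e. $W \subset \W(\md,V)$, forcing $\W(\md,V) \neq \emptyset$. Concretely: run the Dhar iteration $V = V_0 \subset V_1 \subset \cdots$. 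I claim by induction that $W \cap V_i = \emptyset$ for every $i$. For the inductive step, let $W_i = V(G) \setminus V_i \supseteq W$, and consider $\md'' = \md + \dv(\mathbb{1}_{W_i})$. Firing along the larger set $W_i \supseteq W$ changes values by: for $v \in W$, the slope contribution. The key monotonicity fact is that $\md''_v \geq \md'_v$ for $v \in W$, because passing from firing along $W$ to firing along the larger set $W_i$ only adds chips to (or keeps constant) vertices not in... I need to be slightly careful here, but the standard submodularity/monotonicity of chip-firing (firing a larger set is "better" for vertices that stay inside) gives $\md''_v \geq \md'_v \geq 0$ for all $v \in W$. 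Hence no vertex of $W$ is added when passing from $V_i$ to $V_{i+1}$, completing the induction. Therefore $W \subset V(G) \setminus V_n = \W(\md,V)$, so $\W(\md,V) \neq \emptyset$.

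\medskip

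\textbf{The converse: $\md$ is $V$-reduced $\Rightarrow \W(\md,V) = \emptyset$.} Suppose $\W(\md,V) \neq \emptyset$, i.e. $V_n \subsetneq V(G)$. Set $W = \W(\md,V) = V(G) \setminus V_n$, a nonempty subset disjoint from $V$ (indeed disjoint from $V_0 = V$). By the stabilization property noted at the end of Section~\ref{subsec:Dhar}, the divisor $\md + \dv(\mathbb{1}_W)$ is still effective away from $V$; in particular it is non-negative on every vertex of $W$ (since $W \cap V = \emptyset$). But that is exactly the statement that a chip-firing move along $W$ does \emph{not} turn $\md$ negative on any vertex of $W$, contradicting that $\md$ is $V$-reduced. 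Hence $\W(\md,V) = \emptyset$.

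\medskip

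\textbf{Main obstacle.} The only genuinely nontrivial point is the monotonicity claim used in the easy direction: that if firing along $W$ keeps $\md$ non-negative on $W$, then firing along any superset $W_i \supseteq W$ also keeps it non-negative on $W$. This is the graph-theoretic analogue of Dhar's burning algorithm correctness and should follow from comparing the two divisors vertex by vertex on $W$: the difference $(\md + \dv(\mathbb{1}_{W_i})) - (\md + \dv(\mathbb{1}_W)) = \dv(\mathbb{1}_{W_i \setminus W})$ restricted to $W$ counts edges from $W_i \setminus W$ into $W$, which is $\geq 0$ on $W$. So in fact $\md''_v - \md'_v = \#\{\text{edges between } v \text{ and } W_i \setminus W\} \geq 0$ for $v \in W$, and the induction goes through cleanly. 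I would state this as a short lemma or simply inline the one-line edge-count. Everything else is definitional bookkeeping.
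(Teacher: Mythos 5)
Your proof is correct and follows the same contrapositive structure as the paper's own argument in both directions. The only difference is that you spell out, via the edge-count monotonicity $\dv(\mathbb 1_{W_i \setminus W})_v \geq 0$ for $v \in W$ and induction on the Dhar iteration, the containment $W \subset \W(\md,V)$ that the paper asserts simply ``by construction''; this is a welcome elaboration rather than a different approach.
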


\begin{proof}
    By construction of $\W(\md, V)$, we have $V \cap \W(\md, V) = \emptyset$ and the divisor obtained from $\md$ by a chip-firing move along $\W(\md, V)$ remains effective away from $V$. Thus if $\md$ is $V$-reduced we need to have $\W(\md, V) = \emptyset$ by the definition of $V$-reduced.

    Conversely, if $\md$ is not $V$-reduced, there is by definition a subset $W \subset V(G) \setminus V$ such that $\md$ remains effective away from $V$ after a chip-firing move along $W$. By construction of $\W(\md, V)$, we need to have $W \subset \W(\md, V)$ and hence in particular $\W(\md, V) \neq \emptyset$, as claimed.
\end{proof}

The following observation is immediate from the definition of $V$-reduced divisors:

\begin{lma} \label{lma: subset vertices}
    Let $V \subset V' \subset V(G)$ be subsets of vertices and $\md$ a divisor on $G$ that is effective away from $V'$. Then if $\md$ is $V$-reduced, it is also $V'$-reduced.
\end{lma}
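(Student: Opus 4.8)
This is an almost immediate consequence of the definitions, so the proof will be short. Let me set up the statement: we have $V \subset V' \subset V(G)$, a divisor $\md$ effective away from $V'$, and we assume $\md$ is $V$-reduced; we want to conclude $\md$ is $V'$-reduced. There are two things to check in the definition of $V'$-reduced: first, that $\md$ is effective away from $V'$ — but this is given by hypothesis — and second, that for any subset of vertices $W \subset V(G) \setminus V'$, a chip-firing move along $W$ turns $\md$ negative on at least one vertex of $W$.

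For the second point, the key observation is that any $W \subset V(G) \setminus V'$ also satisfies $W \subset V(G) \setminus V$, since $V \subset V'$ implies $V(G) \setminus V' \subset V(G) \setminus V$. Now I would like to invoke $V$-reducedness of $\md$ applied to this same set $W$. The one subtlety is that the definition of $V$-reduced as stated talks about $\md$ being effective away from $V$ and then quantifies over $W \subset V(G)\setminus V$; to apply the "chip-firing move along $W$ turns $\md$ negative somewhere in $W$" clause, I just need $W$ to be a subset of $V(G) \setminus V$ and $\md$ to be $V$-reduced, both of which hold. So the chip-firing move along $W$ makes $\md$ negative on some vertex of $W$, which is exactly what is required to verify that $\md$ is $V'$-reduced.

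Alternatively — and perhaps more cleanly, since the excerpt has just proved Lemma~\ref{lma: decomp reduced} — I could phrase this via the Dhar decomposition: $\md$ being $V$-reduced is equivalent to $\W(\md, V) = \emptyset$, and I would want to argue $\W(\md, V') \subseteq \W(\md, V)$, hence $\W(\md, V') = \emptyset$ too, giving $V'$-reducedness by the same lemma (note $\md$ is effective away from $V'$ by hypothesis, so the lemma applies). This requires a small monotonicity observation about the Dhar decomposition construction: starting the iteration from the larger set $V'$ rather than $V$ yields a larger (or equal) stable set $V_n$ at each stage, so its complement is smaller. I expect the direct argument via the definition to be the least fussy, so I would lead with that.

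**The main obstacle.** Honestly there is no real obstacle here — the result is "immediate from the definition" as the excerpt itself signals. The only thing to be slightly careful about is not to conflate "$W$ a subset of $V(G) \setminus V$" with "$W$ a subset of $V(G) \setminus V'$"; the inclusion between the two complements runs opposite to the inclusion $V \subset V'$, and one must use it in the correct direction. Writing out the one-line set inclusion $V(G) \setminus V' \subseteq V(G) \setminus V$ explicitly guards against that.
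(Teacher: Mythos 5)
Your direct argument is correct and is exactly the "immediate from the definition" reasoning the paper has in mind (it gives no written proof): effectiveness away from $V'$ is part of the hypothesis, and the chip-firing condition for any $W \subset V(G)\setminus V'$ follows from $V$-reducedness because $V(G)\setminus V' \subseteq V(G)\setminus V$. Nothing further is needed.
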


\begin{lma} \label{lma: existence}
    Any divisor class $\delta$ contains a $V$-reduced representative for any $V \subset V(G)$.
\end{lma}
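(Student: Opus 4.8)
The plan is to prove existence of a $V$-reduced representative by a descent / termination argument, mirroring the classical proof for $v$-reduced divisors. Fix a class $\delta$ and start with any representative that is effective away from $V$; such a representative exists because we may fire the full vertex set $V(G)$ repeatedly (or equivalently add a large enough multiple of $\operatorname{div}(\mathbb{1}_V)$) to push all chips outside $V$ onto $V$ — more carefully, since $G$ is connected, for any divisor $\md$ there is a function $f$ with $\operatorname{div}(f) + \md$ effective away from $V$ (e.g. take $f$ large and negative on $V(G)\setminus V$ in a way that respects distances to $V$). Call this divisor $\md^{(0)}$.

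Now iterate the Dhar decomposition. Given $\md^{(i)}$ effective away from $V$, compute $\W(\md^{(i)}, V)$. If it is empty, then by Lemma~\ref{lma: decomp reduced} the divisor $\md^{(i)}$ is $V$-reduced and we are done. Otherwise, set $\md^{(i+1)} = \md^{(i)} + \operatorname{div}(\mathbb{1}_{\W(\md^{(i)},V)})$; by the construction of the Dhar decomposition (the last sentence of Section~\ref{subsec:Dhar}), $\md^{(i+1)}$ is again effective away from $V$, and it is linearly equivalent to $\md^{(i)}$, hence still in $\delta$. So it suffices to show this process terminates.

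For termination, the key is to exhibit a monovariant that strictly decreases at each step. A natural candidate is the total number of chips on $V$, or rather a carefully chosen potential: firing along $\W = \W(\md^{(i)}, V)$ moves chips from $\W$ across the boundary, and in particular moves at least one chip across the edge cut between $\W$ and $V$ — wait, not necessarily directly onto $V$. The cleaner monovariant is $\sum_{v \in V} \md_v$ does not obviously work because chips can accumulate far from $V$ without reaching it; instead I would use the quantity $\Phi(\md) = \sum_{v \in V(G)} \operatorname{dist}(v, V)\cdot \md_v^-$ adjusted, or better: follow the standard argument that fires $V(G)\setminus V$ as far as possible. The cleanest route is to observe that each step decreases $\sum_{v \notin V}\md_v$ is false too; rather, pick a vertex $v_0 \in V$ and note each Dhar step strictly decreases the (nonnegative) integer $\deg(\md) - \md_{v_0}$ bounded below by... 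This is where I would instead appeal to the well-known argument: order the firing so that $\W(\md,V)$ is fired, and since $\W \cap V = \emptyset$ and firing $\W$ sends at least one chip out of $\W$ toward $V(G)\setminus\W \supseteq V$ along each boundary edge, after finitely many steps no chips can be pushed further, because the total "potential" $\sum_v \md_v \cdot \operatorname{dist}(v,V)$ over the effective part is a nonnegative integer that strictly drops.

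The main obstacle is making this monovariant argument airtight: one must confirm that firing $\W(\md^{(i)},V)$ genuinely decreases the chosen integer-valued potential, which requires checking that $\W$ is nonempty implies a chip actually crosses the $\W$–$V_n$ boundary in the net-favorable direction, and that the potential stays bounded below (nonnegativity away from $V$ plus fixed degree handles this). An alternative, perhaps slicker, is to bypass termination entirely: take among all representatives of $\delta$ that are effective away from $V$ the one minimizing $\sum_{v \notin V} \md_v$ (this set is nonempty and the sums form a set of nonnegative integers, so a minimum exists); then argue by Lemma~\ref{lma: decomp reduced} that this minimizer must have $\W(\md,V)=\emptyset$, since otherwise firing along $\W(\md,V)$ would produce a linearly equivalent divisor, still effective away from $V$, with strictly smaller such sum — the strict decrease again coming from the fact that firing a set disjoint from $V$ cannot increase $\sum_{v\notin V}\md_v$ and decreases it whenever there is at least one edge from $\W$ to $V$, which holds by connectedness of $G$ and nonemptiness of $\W \subsetneq V(G)\setminus V$ — one has to handle the case where $\W$ has no edge to $V$ directly but only to other vertices outside $V$, which is the delicate point and is exactly where the iterative Dhar construction, rather than a single fire, is needed. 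I would therefore present the iterative version with the potential $\sum_{v \notin V} \md_v$, proving it strictly decreases after each full Dhar step using that $\W(\md,V)$, being a maximal such set, must have a net outflow.
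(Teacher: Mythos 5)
There is a genuine gap in your termination argument, and the route is also far more laborious than necessary. On the second point first: the paper disposes of this lemma in two lines. Pick any $v \in V$; by Baker--Norine every class contains a $v$-reduced representative, which is effective away from $\{v\}$ and hence away from $V$, and Lemma~\ref{lma: subset vertices} then says that a $\{v\}$-reduced divisor effective away from $V \supseteq \{v\}$ is automatically $V$-reduced (the $V$-reducedness condition quantifies over fewer sets $W$). This also makes your preliminary step (producing a representative effective away from $V$) unnecessary; note that as literally stated that step is broken anyway, since firing the full vertex set $V(G)$ is a no-op because $\dv(\mathbb 1_{V(G)}) = 0$.

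The substantive gap is the monovariant. You settle on $\Phi(\md) = \sum_{v \notin V} \md_v$ and claim it strictly decreases after each Dhar step because $\W(\md,V)$ ``must have a net outflow.'' The net outflow of a firing along $\W(\md,V)$ goes into $V(G) \setminus \W(\md,V)$, which need not meet $V$ at all: maximality of the Dhar set does not give it an edge to $V$. Concretely, take the path $v_1 - v_2 - v_3$, $V = \{v_1\}$, $\md = (0,0,5)$. The Dhar construction first fires $\{v_2,v_3\}$, which turns $v_2$ negative, so $\W(\md,V) = \{v_3\}$; firing it yields $(0,1,4)$ and $\Phi$ stays equal to $5$. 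You correctly flag this as ``the delicate point'' but then assert rather than resolve it. The repair is the one the paper uses in the termination proof of Algorithm~\ref{algorithm}: stratify $V(G)$ by distance to $V$ into sets $S_0, \ldots, S_l$ and order divisors lexicographically by $\left( \sum_{v \in S_0}\md_v, \ldots, \sum_{v \in S_l}\md_v \right)$; if $k_0$ is the minimal distance from $V$ to $\W(\md,V)$, then firing $\W(\md,V)$ strictly increases the $S_{k_0-1}$-entry and leaves the earlier entries unchanged, and a separate finiteness argument bounds the number of admissible values. With that substitution your iterative argument does go through, but as written the proof does not close.
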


\begin{proof}
    Let $v \in V$ (recall that we assume $V \neq \emptyset$). 
    By \cite[Proposition 3.1]{BN} any divisor class $\delta$ contains a $v$-reduced representative. Thus the claim follows from Lemma~\ref{lma: subset vertices}. 
\end{proof}

\begin{rmk}
    In \cite[Proposition 3.1]{BN} it is shown that the $v$-reduced representative is unique in its divisor class. It is easy to see, that $V$-reduced divisors need not be unique in their divisor class if $V$ contains more than one vertex.
\end{rmk}

\begin{ex} \label{ex:not v reduced}
    As we saw, every $v$-reduced divisor is $V$-reduced for any set of vertices $V$ that contains the vertex $v$. In the other direction, not every $V$-reduced divisor is $v$-reduced for some $v \in V$. For example, let $G$ be the graph with three vertices $v_1, v_2$ and $v_3$ and two edges between each pair of them, and $\md$ the divisor with value $2$ on each vertex (see Figure~\ref{fig1}). Let $V = \{v_1, v_2\}$ consist of $2$ vertices. Then $\md$ is $V$-reduced, but not $v$-reduced for any vertex $v \in V(G)$. 
\end{ex}

\tikzset{every picture/.style={line width=0.75pt}}
    
    \begin{figure}[ht] 
    \begin{tikzpicture}[x=0.7pt,y=0.7pt,yscale=-0.8,xscale=0.8]
    \import{./}{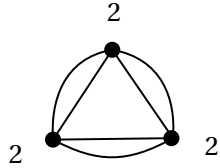}
    \end{tikzpicture}	
    \caption{
     The graph $G$ and divisor $\md$ from Example~\ref{ex:not v reduced}.
    } \label{fig1}
\end{figure}

One property that makes $v$-reduced divisors particularly useful is that a divisor class $\delta$ is effective if and only if its $v$-reduced representative is effective. This enables the use of $v$-reduced divisors to study effectiveness (or more generally, the Baker-Norine rank) of divisor classes. 

This does not generalize to $V$-reduced divisors: a $V$-reduced divisor can fail to be effective, even though its class is effective. An immediate example is given by $V = V(G)$ and $\md$ not effective but with effective class $\delta$. Examples where $V$ is a proper subset of $V(G)$ are also easy to find:  
\begin{ex}
Let $G$ be the graph from Example~\ref{ex:not v reduced}, $\md = (-2,2,2)$, and $V = \{v_1, v_2\}$. Then $\md$ is $V$-reduced, and not effective, but linearly equivalent to an effective divisor. 
\end{ex}

A key observation in establishing this property for $v$-reduced divisors is the following: suppose $\md \in \delta$ fails to be effective only at $v$ and $\delta$ is effective. Then $\W(\md, v) \neq \emptyset$ in the Dhar decomposition with respect to $v$ and $\md$. Since the $v$-reduced divisor in $\delta$ can be obtained by successive chip-firing moves along the sets $\W(\md, v)$, this leads to the conclusion that the $v$-reduced divisor in $\delta$ needs to  be effective if $\delta$ is. 

This observation does generalize to $V$-reduced divisors, and will be central for our further arguments:

\begin{prop} \label{prop:Dhar decomp effective}
    Let $\md$ be a divisor on $G$ that is not effective but whose linear equivalence class $\delta$ is effective. Let $V \subset V(G)$ be the set of vertices, on which $\md$ has negative coefficients. Then \[\W(\md, V) \neq \emptyset.\] 
\end{prop}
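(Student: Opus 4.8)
The plan is to argue by contradiction: suppose $\W(\md, V) = \emptyset$, i.e. $\md$ is $V$-reduced (by Lemma~\ref{lma: decomp reduced}), and derive a contradiction with the assumption that $\delta$ is effective. Let $\me \in \delta$ be effective. Since $\md \sim \me$, we can write $\me = \md + \dv(f)$ for some $f \in C^0(G; \ZZ)$. The idea is to run the standard Dhar-type ``level set'' argument on $f$: consider the superlevel sets $W_t = \{v : f(v) \geq t\}$ for $t$ ranging over the values taken by $f$, from the top down. Normalizing $f$ so that its minimum is attained at some vertex of $V$ (which is possible after adding a constant, provided we first check that $f$ cannot be required to attain its minimum only off $V$ — this is exactly where effectivity of $\me$ away from $V$ enters), we want to show that some nonempty $W_t$ disjoint from $V$ can be fired without creating new negative values of $\md$, contradicting $V$-reducedness.

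The key steps, in order, are as follows. First, reduce to the situation where $f \not\equiv \mathrm{const}$: since $\md$ is not effective and $\me$ is effective, $\md \neq \me$, so $f$ is non-constant. Second, show that the minimum value of $f$ can be taken to be $0$ and is attained at a vertex of $V$: if $f$ attained its minimum only at vertices outside $V$, then at such a vertex $w \notin V$ with $f(w)$ minimal, all outgoing slopes of $f$ are $\geq 0$, so $\dv(f)_w \geq 0$, giving $\me_w = \md_w + \dv(f)_w \geq 0$ — but actually we need the stronger statement relating the full firing. Instead, the cleaner route: let $W = \{v : f(v) > \min f\}$ be the complement of the minimal level set; the divisor $\md + \dv(\mathbb 1_W)$ is a step in writing $\me$ from $\md$, and iterating, $\me$ is obtained from $\md$ by successively firing the sets $W_{t_1} \supsetneq W_{t_2} \supsetneq \cdots$ where the $t_i$ are the values of $f$ above the minimum. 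Third — the heart of the argument — among these sets $W_{t_i}$, pick the largest one, $W_{t_1}$ (the set where $f$ is not minimal); if $W_{t_1} \cap V = \emptyset$, I claim firing $W_{t_1}$ keeps $\md$ effective away from $V$, contradicting that $\W(\md,V) = \emptyset$. To see the claim: a vertex $v \notin V$ loses chips under firing $W_{t_1}$ only if $v \in W_{t_1}$; but for $v \in W_{t_1}$, continuing to fire all of $W_{t_2}, \ldots$ to reach $\me$ only removes further chips from $v$ at later stages (each subsequent set is contained in $W_{t_1}$)... this monotonicity needs care, so the actual mechanism is: $\me_v \geq 0$ and $\me_v$ is obtained from $\md_v$ by a sequence of firings, and for $v$ in the innermost relevant level set the value only decreases, so $\md_v \geq \me_v \geq 0$ already — meaning $v$ was never in danger. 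The precise bookkeeping is that the divisor after firing $W_{t_1}$ has value at $v \in W_{t_1}$ at least $\me_v \geq 0$ when $v$ also lies in all subsequent $W_{t_j}$, and otherwise one controls it directly. Fourth, handle the case $W_{t_1} \cap V \neq \emptyset$: then drop down to the next level set; since $V \neq V(G)$ is possible to contain... actually since eventually the sets shrink to empty, there is a first index $j$ with $W_{t_j} \cap V = \emptyset$ (if no level set above the minimum misses $V$, then $V$ meets every nonempty $W_{t_i}$; but the minimal level set itself, $V(G) \setminus W_{t_1}$, is where $f$ is smallest — and we arranged $\min f$ to be attained in $V$, so $V$ meets the complement of $W_{t_1}$; this doesn't immediately give a level set missing $V$). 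The correct normalization to force a level set disjoint from $V$: choose the additive constant in $f$ so that $\min_{v \in V} f(v) = 0$ and $f \geq 0$ everywhere is NOT what we want; instead choose it so that $\min_{v \in V} f(v)$ equals the global minimum — equivalently, pick the representative $\me \in \delta$ that is $V$-reduced-adjacent...

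The main obstacle, and the step I expect to require the most care, is precisely this choice of $f$ and the monotonicity argument showing that the outermost level set of $f$ disjoint from $V$ can be fired without creating negativity away from $V$. The clean way to organize it: among all $g \in C^0(G;\ZZ)$ with $\md + \dv(g)$ effective, choose $g$ with $\min_{v \in V} g(v)$ as large as possible relative to $\min g$ — or better, choose the effective $\me \in \delta$ and the connecting function $f$ with $f|_V$ constant equal to $\min f$ (this is achievable because $\md$ is effective away from $V$: if some $v \in V$ had $f(v) > \min f$, one analyzes the level sets of $f$ meeting $V$). Granting such an $f$, let $W$ be the top nonempty superlevel set; then $W \cap V = \emptyset$, and firing $W$ corresponds to the first move in transforming $\md$ into $\me$; every vertex that goes negative under this single move would have to recover and end non-negative in $\me$, but since $W$ is the outermost set the firing pattern on $W$ only continues inward, so a vertex negative after firing $W$ stays $\leq$ that value — contradiction with $\me \geq 0$. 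Hence no vertex outside $V$ goes negative, so $W \subseteq \W(\md,V)$ and $\W(\md,V) \neq \emptyset$. I would present the level-set/monotonicity lemma carefully as the crux, citing the analogous argument for single vertices in \cite{BN} or \cite{CLM} as a template.
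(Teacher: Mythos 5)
Your proposal circles the right idea---decompose $f$ into indicator functions of its superlevel sets and fire one of them---but it commits to the wrong level set, and the monotonicity claim you ultimately rely on is false. The paper's proof fires the \emph{top} level set $F_k=\{v : f(v)=\max f\}$ (after normalizing $\min f=0$). A vertex $v\in F_k$ lies in every set $F_i$ of the decomposition, so it never gains a chip at any stage; hence $\md_v\ge\me_v\ge 0$, which gives both that $F_k\cap V=\emptyset$ \emph{for free} (since $V$ is exactly the negative set of $\md$) and that firing $F_k$ alone cannot make $v$ negative, because $\dv(\mathbb 1_{F_k})_v\ge\dv(f)_v$. Your outermost set $W_{t_1}=\{v : f(v)>\min f\}$ has neither property: a vertex in $W_{t_1}\setminus W_{t_2}$ adjacent to $W_{t_2}$ \emph{gains} chips at the second stage, so it can go negative after firing $W_{t_1}$ and recover later. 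Concretely, on the path $u\,\text{--}\,v\,\text{--}\,w$ with $\md=(-1,0,1)$, $V=\{u\}$, $f=(0,1,2)$, $\me=(0,0,0)$: here $W_{t_1}=\{v,w\}$ is disjoint from $V$ and $f|_V=\min f$, yet firing $W_{t_1}$ yields $(0,-1,1)$, negative at $v\notin V$. So your concluding step (``a vertex negative after firing $W$ stays $\le$ that value'') does not hold.

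The second gap is the normalization on which you spend most of the proposal. For a fixed effective $\me$, the function $f$ with $\me=\md+\dv(f)$ is unique up to an additive constant, so you cannot arrange $f|_V$ to be constantly equal to $\min f$ unless it already is; you would have to pick a different effective representative, and proving that some effective $\me\in\delta$ admits such an $f$ (i.e.\ is reachable from $\md$ by firing only sets disjoint from $V$) is essentially the content of the proposition itself---it is what Algorithm~\ref{algorithm} produces---so the argument risks circularity. With the top level set this issue evaporates entirely. You do name the correct mechanism in passing (``for $v$ in the innermost relevant level set the value only decreases''), but the proof as organized does not go through; committing to $F_k$ from the start and deleting the normalization step would repair it and recover the paper's argument.
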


\begin{proof}
    By assumption, $\md$ is linearly equivalent to an effective divisor $\md'$. Thus there is an integer valued function $f \in C^0(G; \ZZ)$ such that \[\md' = \md + \mathrm{div}(f).\]
    
    Changing $f$ by adding to its value the same integer at each vertex of $G$ does not change $\mathrm{div}(f)$. Thus we may assume that the minimal value of $f$ on vertices of $G$ is $0$. Since $\md$ itself is not effective, $f$ is not zero on all vertices of $G$.
    
    Write $k = \max \{f(v) \mid v \in V(G)\}$ and \[F_i = \left \{ v \in V(G) \mid f(v) \geq i \right \}, \, i = 1, \ldots, k.\] Then \[f = \sum_{i= l}^{k} \mathbb 1_{F_i},\] 
    where $\mathbb 1_{F_i}$ has value $1$ on vertices in $F_i$ and $0$ otherwise. Since $\mathrm{div}(\mathbb 1_{F_i})$ encodes a chip-firing move along $F_i$, this means that $\md'$ is obtained from $\md$ by successively chip-firing along the $F_i$.  

    In particular, since $F_k \subset F_i$ for all $i \leq k$, the vertices in $F_k$ do not receive any chips while moving from $\md$ to $\md'$ in this way. That is, $\md_v' \leq \md_v$ for all $v \in F_k$.  Since $\md'$ is effective and $V$ consists precisely of the vertices on which $\md$ is not effective, it follows that $F_k \cap V = \emptyset$. On the other hand, and again since the vertices in $F_k$ do not receive any chips and $\md'$ is effective, we have that the divisor obtained from $\md$ via a chip-firing move along $F_k$ remains effective at vertices in $F_k$ and hence effective away from $V$. Thus $\md$ is not $V$-reduced and the claim follows from Lemma~\ref{lma: decomp reduced}. 
\end{proof}

\begin{cor}
    Let $\md$ be a divisor that is not effective and $V \subset V(G)$ the set of vertices where $\md$ has negative coefficients. If $\md$ is $V$-reduced, then the class $\delta$ of $\md$ is not effective.  
\end{cor}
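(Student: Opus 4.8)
The plan is to argue by contradiction and simply combine the two results immediately preceding the statement, so there is essentially no obstacle here beyond checking that the hypotheses line up correctly. Suppose the class $\delta$ of $\md$ is effective. Since $V$ is by definition the set of vertices on which $\md$ has negative coefficients, $\md$ is effective away from $V$; moreover $\md$ itself is not effective, so all the hypotheses of Proposition~\ref{prop:Dhar decomp effective} are satisfied. That proposition then yields $\W(\md, V) \neq \emptyset$.

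On the other hand, $\md$ is assumed to be $V$-reduced and is effective away from $V$, so Lemma~\ref{lma: decomp reduced} applies and gives $\W(\md, V) = \emptyset$. This contradicts the previous paragraph, and hence $\delta$ cannot be effective.

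The only thing worth being slightly careful about is that the set $V$ appearing in the hypothesis of Proposition~\ref{prop:Dhar decomp effective} is exactly the support of the negative part of $\md$, which is precisely how $V$ is specified in the corollary, so there is no mismatch. I expect the whole argument to take two or three sentences and to require no genuinely new ideas — it is a direct restatement of the contrapositive of Proposition~\ref{prop:Dhar decomp effective} packaged through Lemma~\ref{lma: decomp reduced}.
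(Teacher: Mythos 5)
Your argument is correct and is exactly the paper's proof: the paper states the corollary is immediate from Lemma~\ref{lma: decomp reduced} and Proposition~\ref{prop:Dhar decomp effective}, which is precisely the two-step contradiction you spell out. No issues.
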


\begin{proof}
    This is immediate from Lemma~\ref{lma: decomp reduced} and Proposition~\ref{prop:Dhar decomp effective}.
\end{proof}

\subsection{An algorithm to find effective representatives}

The previous results allow us to check whether a given divisor is linearly equivalent to an effective divisor, and if so find it. 

\begin{Alg}\label{algorithm}
     Given a divisor $\md$ that is not effective, let $V \subset V(G)$ denote the set of vertices on which $\md$ is not effective. If $V = V(G)$, there is no effective divisor linearly equivalent to $\md$ since in this case $\deg(\md) < 0$. 
     
     Let $V(G) = V_n \cup \W(\md, V)$ be the Dhar decomposition with respect to $V$ and $\md$ introduced in Section~\ref{subsec:Dhar}. If $\W(\md, V) = \emptyset$, $\md$ is not linearly equivalent to an effective divisor by Proposition~\ref{prop:Dhar decomp effective}.  Otherwise, we perform a chip-firing move along $\W(\md, V)$ obtaining a divisor $\md_1 \sim \md$. 
    
    If $\md_1$ is effective, we are done. Otherwise, let $V_1$ denote the set of vertices on which $\md_1$ fails to be effective. Since $\md_1$ was obtained from $\md$ by a chip-firing move along $\W(\md, V)$, and $\md_1$ is effective on vertices of $\W(\md, V)$ by the construction of $\W(\md, V)$, we have $V_1 \subset V$. We repeat the above construction for $\md_1$ and $V_1$ to obtain $\md_2$ and so on. In the end we obtain either $\md_i \sim \md$ with $\md_i$ effective, or $\W(\md_i, V) = \emptyset$ and hence $\md$ is not linearly equivalent to any effective divisor by Proposition~\ref{prop:Dhar decomp effective}.
\end{Alg}
    
\begin{proof}[Proof of termination]
    It suffices to show that eventually either $\W(\md_i, V_i) = \emptyset$ or $V_{i} \neq V_{i + 1}$. In the first case, the algorithm terminates immediately and the second suffices to prove eventual termination since $V_{i + 1} \subset V_{i}$ and there are only finitely many vertices in $V$.
    
    For a vertex $v \in V(G)$ denote by $d(v)$ the length of the shortest path between $v$ and a vertex contained in $V$. Let $l = \max \{d(v) \mid v \in V(G)\}$ and set \[S_k = \{ v \in V(G) \mid d(v) = k \} \] 
    for $k = 0, \ldots, l$. Associated to a multidegree $\md$ we define \[\mu(\md) = \left ( \sum_{v \in S_0} \md_v, \ldots, \sum_{v \in S_l} \md_v \right ).\]
    Then the lexicographic order on the entries in $\mu(\md)$ defines a partial order on the set of degree $d$ divisors $\Div^d(G)$ on $G$. 

    We claim that if $\md_{1}$ is obtained from $\md$ by a chip-firing move along $\W(\md, V)$ as above, then $\md_{1} > \md$ in this partial order. Indeed, let $k_0$ be the length of a path of minimal length between a vertex in $V$ and a vertex in $\W(\md, V)$. That is, $k_0$ is the minimal value for which $\W(\md, V) \cap S_{k_0} \neq \emptyset$. Then a chip-firing move of $\md$ along $\W(\md, V)$ increases the value of $\sum_{v \in S_{k_0 - 1}} (\md)_v$ while not decreasing the value of $\sum_{v \in S_k} (\me_i)_v$ for any $k < k_0 - 1$. Hence $\md_{1} > \md$. 

    Finally, there are only finitely many divisors $\md'$ of fixed degree $d$ that are effective outside $V$ and such that $\sum_{v \in V} \md_v \leq \sum_{v \in V} \md'_v < 0$. By construction, $\md_1$ satisfies these conditions as long as $V = V_1$ and hence the sequence \[\md < \md_1 < \md_2 \ldots\]
    needs to terminate; that is, eventually either $\W(\md_i, V_i) = \emptyset$ or $V_{i} \neq V_{i + 1}$.
\end{proof}

\begin{rmk}
    The above algorithm is a version of \cite[Algorithm 4]{BS13}, which is as far as we know the most common way to check algorithmically, whether a divisor $\md$ is linearly equivalent to an effective divisor. The algorithm in \emph{loc. cit.} calculates the $v$-reduced representative in three steps: 
    \begin{enumerate}
        \item Replace $\md$ by a divisor $\md$ such that $|\md_w| < \val(w)$ for all $w \neq v$.
        \item Calculate $\md' \sim \md$ such that $\md'$ is effective away from $v$.
        \item Calculate the $v$-reduced representative by running the above algorithm for $V = \{v\}$.
    \end{enumerate}

    The difference in Algorithm~\ref{algorithm} is that Step 2 is avoided by directly calculating $V_i$-reduced representatives for successive $V_i$. The first step in \cite[Algorithm 4]{BS13} is used to bound the appearing coefficients of $\md$ and thus reduce computing times. 
\end{rmk}

\section{Potential theory} \label{sec:potential}

    In this section, we compare $V$-reduced divisors to the $\mathcal E$-reduced divisors introduced by Luo \cite{L13} \cite{L18}. This generalizes an observation of Baker and Shokrieh \cite{BS13} that $v$-reduced divisors minimize a certain function on the set of all degree $d$ divisors effective away from $v$.

    \subsection{\texorpdfstring{$\mathcal E$}{E}-reduced divisors}

    In this section, we need to consider divisors with values not only in $\mathbb Z$ but also in $\mathbb R$. 
    Throughout the section, $\RR$-divisors are given by calligraphic upper case letters, whereas $\ZZ$-divisors -- which we will keep calling just divisors -- are given, as before, by underlined lower case letters. Clearly, any divisor is in particular an $\RR$-divisor. 
    
    Recall from Section~\ref{sec:divisors} that an $\RR$-divisor $\mathcal E$ is a formal linear combination \[\mathcal E = \sum_{v \in V(G)} \mathcal E_v v,\]
    with coefficients $\mathcal E_v \in \RR$. 
    
    Recall that $C^0(G; \RR)$ denotes the set of $\RR$-valued functions with domain $V(G)$. For a function $f \in C^0(G; \RR)$ we denote by $U_0(f)$ the set of vertices $v$ on which $f(v) = 0$. 
    
    Given any two $\RR$-divisors $\mathcal E$ and $\mathcal D$ of the same degree, there exists a function $f \in C^0(G; \RR)$ such that $\mathcal E = \mathcal D + \mathrm{div}(f)$; this function is unique up to addition of the same scalar to the value on each vertex of $G$ \cite[Theorem 6]{BF06}. 
    
    Let \[q_{\mathcal E}(\mathcal D)\colon V(G) \to \RR \] be the unique function such that \[\mathcal E = \mathcal D + \mathrm{div}\left(q_{\mathcal E}(\mathcal D)\right)\] and the minimal value of $q_{\mathcal E}(\mathcal D)$ equals $0$. 
    
    \begin{defn}
        Let $\mathcal E$ be an $\RR$-divisor of degree $d$ supported on a set of vertices $V \subset V(G)$. A divisor $\md$ of the same degree $d$ is called $\mathcal E$\emph{-reduced} if it is effective away from $V$ and the value of \[\sum_{v \in V(G)} q_{\mathcal E}(\md)(v) \]
        is minimal among all divisors $\md'$ that are linearly equivalent to $\md$ and effective away from $V$. 
    \end{defn}

    \begin{rmk}
        The notion of $\mathcal E$-reduced divisors was introduced in \cite{L13}, where it is shown that every linear system contains a unique $\mathcal E$-reduced representative. This follows from \cite[Theorem 5.3]{L13} and the observation that the locus \[\left\{ \md' \mid \md' \sim \md \text { and } \md' \text{ is effective away from } V \right \}\] is, in the language of \emph{loc. cit.}, tropically convex and compact.

        As observed already in \cite[Lemma 4.8]{BS13}, $v$-reduced divisors are the special case of $\mathcal E$-reduced divisors for $V = \{v\}$ and $\mathcal E = d \cdot v$.  
    \end{rmk}
    
    \subsection{Preliminary observations}

    Before we can clarify the relation between $\mathcal E$- and $V$-reduced divisors, we need to collect some auxiliary results. Recall that for $f \in C^0(G; \RR)$ we denote by $U_0(f)$ the set of vertices on which $f$ has value $0$.
    
    \begin{lma} \label{lma:chip firing dynamics}
        Let $\mathcal E$ be an $\RR$-divisor of degree $d$ and $\md \sim \md'$ two linearly equivalent divisors of degree $d$. Let $f \in C^0(G; \ZZ)$ be the integer-valued function on $V(G)$ such that \[\md' = \md + \dv(f)\] and the minimal value of $f$ is $0$.
        Assume that $U_0(f) \cap U_0\left(q_{\mathcal E}(\md')\right) \neq \emptyset$. 
        Then \[q_{\mathcal E}(\md) = q_{\mathcal E}(\md') + f.\]
    \end{lma}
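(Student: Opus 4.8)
The plan is to produce, out of $q_{\mathcal E}(\md')$ and $f$, a single function that induces $\mathcal E$ from $\md$ and is already correctly normalized, so that it must coincide with $q_{\mathcal E}(\md)$ by the uniqueness of such a function.

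First I would check that $g \coloneqq q_{\mathcal E}(\md') + f$ satisfies $\mathcal E = \md + \dv(g)$. From $\md' = \md + \dv(f)$ we get $\dv(f) = \md' - \md$, and by definition $\mathcal E = \md' + \dv(q_{\mathcal E}(\md'))$. Using that $f \mapsto \dv(f)$ is additive,
\[
\md + \dv(g) = \md + \dv(q_{\mathcal E}(\md')) + \dv(f) = \md + \dv(q_{\mathcal E}(\md')) + (\md' - \md) = \md' + \dv(q_{\mathcal E}(\md')) = \mathcal E.
\]
Since any two functions in $C^0(G;\RR)$ inducing the same $\RR$-divisor differ by a global additive constant (\cite[Theorem 6]{BF06}), this forces $q_{\mathcal E}(\md) = g + c$ for some $c \in \RR$.

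Next I would pin down $c$ via the normalization. By construction $q_{\mathcal E}(\md') \geq 0$ and $f \geq 0$ pointwise on $V(G)$, hence $g \geq 0$ pointwise and so $\min_{v \in V(G)} g(v) \geq 0$. Moreover, the hypothesis $U_0(f) \cap U_0(q_{\mathcal E}(\md')) \neq \emptyset$ supplies a vertex $v$ with $f(v) = q_{\mathcal E}(\md')(v) = 0$, so $g(v) = 0$ and therefore $\min_{v \in V(G)} g(v) = 0$. As $q_{\mathcal E}(\md)$ is characterized among the functions inducing $\mathcal E$ from $\md$ by having minimal value $0$, and $g$ already has this property, we conclude $c = 0$, i.e. $q_{\mathcal E}(\md) = q_{\mathcal E}(\md') + f$.

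None of the steps is genuinely difficult: the first is a formal computation, and the only place the hypothesis enters is the second step, where nonemptiness of $U_0(f) \cap U_0(q_{\mathcal E}(\md'))$ is exactly what guarantees the candidate $g$ is already normalized. (Without that assumption one would only get $q_{\mathcal E}(\md) = q_{\mathcal E}(\md') + f - \min_{v \in V(G)}(q_{\mathcal E}(\md') + f)(v)$, with a possibly nonzero correction term.) So the only "obstacle" is to recognize that the stated intersection condition is precisely the right normalization hypothesis.
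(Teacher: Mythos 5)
Your proof is correct and follows exactly the paper's argument: both verify that $q_{\mathcal E}(\md') + f$ induces $\mathcal E$ from $\md$ via additivity of $\dv$, and then use nonnegativity of both summands together with the intersection hypothesis to conclude that this sum has minimal value $0$, hence equals $q_{\mathcal E}(\md)$ by the defining normalization.
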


    \begin{proof}
        We have by definition and the assumptions of the lemma: \[\mathcal E = \dv \left(q_{\mathcal E}(\md')\right) + \md' = \dv \left(q_{\mathcal E}(\md')\right) + \dv\left(f \right) + \md = \dv \left(q_{\mathcal E}(\md') + f\right) + \md.\]
        Since $U_0(f) \cap U_0(q_{\mathcal E}(\md')) \neq \emptyset$, the minimal value of $q_{\mathcal E}(\md') + f$ is $0$. Thus by definition \[q_{\mathcal E}(\md) = q_{\mathcal E}(\md') + f.\]
    \end{proof}

    \begin{cor} \label{cor:chip firing dynamics}
        Let $\mathcal E$ be an $\RR$-divisor of degree $d$ and $\md \sim \md'$ be two linearly equivalent divisors of degree $d$ such that $\md'$ is obtained from $\md$ by a chip-firing move along a set of vertices $V'$. Suppose $U_0(q_{\mathcal E}(\md')) \not \subset V'$. Then \[\sum_{v \in V(G)} q_{\mathcal E}(\md)(v) = |V'| 
        + \sum_{v \in V(G)} q_{\mathcal E}(\md')(v).\]
    \end{cor}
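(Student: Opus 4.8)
The plan is to deduce this Corollary directly from Lemma~\ref{lma:chip firing dynamics} by choosing $f = \mathbb 1_{V'}$, so that $\dv(f)$ encodes the chip-firing move along $V'$ and hence $\md' = \md + \dv(\mathbb 1_{V'})$. The function $\mathbb 1_{V'}$ takes only the values $0$ and $1$, so its minimal value on $V(G)$ is $0$ (here I should note $V' \neq V(G)$, which is forced by the hypothesis $U_0(q_{\mathcal E}(\md')) \not\subset V'$ since $U_0(q_{\mathcal E}(\md'))$ is non-empty by definition of $q_{\mathcal E}$), so $f = \mathbb 1_{V'}$ is exactly the normalized function required in the lemma. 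Moreover $U_0(\mathbb 1_{V'}) = V(G) \setminus V'$, so the hypothesis $U_0(q_{\mathcal E}(\md')) \not\subset V'$ says precisely that $U_0(q_{\mathcal E}(\md'))$ meets $V(G) \setminus V' = U_0(f)$, which is the non-emptiness hypothesis $U_0(f) \cap U_0(q_{\mathcal E}(\md')) \neq \emptyset$ needed to apply the lemma.

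Applying Lemma~\ref{lma:chip firing dynamics} then gives $q_{\mathcal E}(\md) = q_{\mathcal E}(\md') + \mathbb 1_{V'}$ as functions on $V(G)$. The final step is to sum both sides over all vertices $v \in V(G)$: the left side becomes $\sum_{v \in V(G)} q_{\mathcal E}(\md)(v)$, and the right side becomes $\sum_{v \in V(G)} q_{\mathcal E}(\md')(v) + \sum_{v \in V(G)} \mathbb 1_{V'}(v) = \sum_{v \in V(G)} q_{\mathcal E}(\md')(v) + |V'|$, since $\mathbb 1_{V'}$ sums to the cardinality of $V'$. Rearranging yields the claimed identity $\sum_{v \in V(G)} q_{\mathcal E}(\md)(v) = |V'| + \sum_{v \in V(G)} q_{\mathcal E}(\md')(v)$.

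There is essentially no obstacle here; this is a routine unwinding of the preceding lemma, and the only point requiring a word of care is verifying that the chip-firing function $\mathbb 1_{V'}$ genuinely has minimal value $0$, i.e. that $V' \subsetneq V(G)$, which as noted follows from the hypothesis together with the fact that the normalized potential $q_{\mathcal E}(\md')$ attains the value $0$ somewhere by construction. I would present the proof in two or three sentences exactly along these lines.

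\begin{proof}
    Set $f = \mathbb 1_{V'}$, so that $\md' = \md + \dv(f)$. Since $U_0(q_{\mathcal E}(\md'))$ is non-empty (the function $q_{\mathcal E}(\md')$ attains its minimum $0$) and is not contained in $V'$ by hypothesis, we have $V' \neq V(G)$, hence $f$ takes the value $0$ somewhere and its minimal value on $V(G)$ is $0$. Moreover $U_0(f) = V(G) \setminus V'$, so the assumption $U_0(q_{\mathcal E}(\md')) \not\subset V'$ is exactly the statement that $U_0(f) \cap U_0(q_{\mathcal E}(\md')) \neq \emptyset$. Thus Lemma~\ref{lma:chip firing dynamics} applies and gives
    \[q_{\mathcal E}(\md) = q_{\mathcal E}(\md') + \mathbb 1_{V'}.\]
    Summing this identity over all $v \in V(G)$ and using $\sum_{v \in V(G)} \mathbb 1_{V'}(v) = |V'|$ yields
    \[\sum_{v \in V(G)} q_{\mathcal E}(\md)(v) = |V'| + \sum_{v \in V(G)} q_{\mathcal E}(\md')(v),\]
    as claimed.
\end{proof}
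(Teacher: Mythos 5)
Your proof is correct and follows exactly the paper's own argument: set $f = \mathbb 1_{V'}$, note that the hypothesis $U_0(q_{\mathcal E}(\md')) \not\subset V'$ is precisely the intersection condition of Lemma~\ref{lma:chip firing dynamics}, and sum the resulting identity over $V(G)$. Your extra remark that $V' \neq V(G)$ (so that $\mathbb 1_{V'}$ really has minimal value $0$) is a small point of care the paper leaves implicit, but otherwise the two proofs coincide.
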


    \begin{proof}
        Denote by $\mathbb 1_{V'}$ the characteristic function of $V'$ (i.e. with value $1$ on each vertex in $V'$ and value $0$ on all other vertices). By definition, we then have \[\md' =  \md + \mathrm{div}\left(\mathbb 1_{V'} \right).\]
        Since $U_0(q_{\mathcal E}(\md')) \not \subset V'$, we have \[U_0\left(q_{\mathcal E}(\md')\right) \cap U_0\left(\mathbb 1_{V'}\right) = U_0\left(q_{\mathcal E}(\md')\right) \cap \left (V(G) \setminus V' \right) \neq \emptyset.\] Setting $f = \mathbb 1_{V'}$ in Lemma~\ref{lma:chip firing dynamics} gives the claim.
    \end{proof}

\begin{lma}\label{lma: interpolating function}
    Let $G$ be a graph, $v \in V$ a fixed vertex and $V \subset V(G)\setminus \{v\}$ a set of vertices not containing $v$. Then for any $k \in \mathbb R$ there is an $\RR$-valued function $f \in C^0(G; \RR)$ on $V(G)$ such that
    \begin{enumerate}
        \item The value of $f$ on vertices in $V$ is $k$,
        \item the value of $f$ on $v$ is $0$, 
        \item and for all vertices $u \neq v$ and $u \not \in V$, the sum of the outgoing slopes of $f$ at $u$ is $0$, i.e., \[\sum_{e = uw} f(w) - f(u) = 0, \] where the sum runs over all edges adjacent to $u$.
    \end{enumerate}
\end{lma}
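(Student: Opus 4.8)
The plan is to recognize this as a discrete Dirichlet problem. Set $B = V \cup \{v\}$, the ``boundary'' on which the value of $f$ is prescribed by conditions (1) and (2), and $W = V(G) \setminus B$, the ``interior''. Condition (3) is precisely the requirement that $\dv(f)_u = 0$ for every $u \in W$, i.e.\ that $f$ be harmonic at each interior vertex. By linearity of $f \mapsto \dv(f)$ it would even suffice to treat $k = 1$ and rescale, but it costs nothing to argue for general $k$. If $W = \emptyset$ we simply let $f$ be $0$ on $v$ and $k$ on $V$; then (1) and (2) hold and (3) is vacuous, so assume $W \neq \emptyset$.

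First I would fix $f(v) = 0$ and $f(w) = k$ for $w \in V$, and treat the values $x = (f(u))_{u \in W}$ as unknowns. Writing out the equations $\dv(f)_u = 0$ for $u \in W$ (loops may be ignored, as they contribute $0$ to $\dv$) yields a linear system $L_W\, x = b$, where $L_W$ is the principal submatrix of the Laplacian of $G$ indexed by the vertices in $W$, and $b_u$ equals $k$ times the number of edges joining $u$ to $V$ (the vertex $v$ contributes nothing since $f(v) = 0$).

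The crux — and the step I expect to be the main obstacle — is the invertibility of $L_W$. For this I would compute the quadratic form of $L_W$: for every $x \in \RR^{W}$,
\[
x^{\mathsf T} L_W\, x \;=\; \sum_{\substack{e = uw\\ u,w \in W}} (x_u - x_w)^2 \;+\; \sum_{\substack{e = uw\\ u \in W,\ w \notin W}} x_u^2 \;\geq\; 0 ,
\]
and equality forces $x$ to be constant on each connected component of the subgraph of $G$ induced by $W$, with that constant equal to $0$ on any component joined by an edge to $B = V(G) \setminus W$. Since $G$ is connected and $W \subsetneq V(G)$ (because $v \notin W$), a component $C$ of the induced subgraph on $W$ has an edge leaving $C$, and such an edge cannot go to another interior vertex; hence every component is joined to $B$, so $x = 0$. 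Thus $L_W$ is positive definite, in particular invertible.

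Consequently $L_W\, x = b$ has a (unique) solution; extending it by $f(v) = 0$ and $f|_V = k$ produces the desired $f \in C^0(G; \RR)$. Conditions (1) and (2) hold by construction, and expanding $\dv(f)_u$ for $u \in W$ and substituting $L_W\, x = b$ gives $\dv(f)_u = 0$, which is condition (3). One can alternatively bypass the linear-algebra bookkeeping by minimizing the Dirichlet energy $\sum_{e = uw}(g(u) - g(w))^2$ over the affine space $\{g \in C^0(G;\RR) : g(v) = 0,\ g|_V = k\}$: connectedness of $G$ makes this energy coercive there, so a minimizer exists, and its first-order optimality conditions at the vertices of $W$ are exactly condition (3).
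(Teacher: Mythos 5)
Your proof is correct, and it takes a genuinely different route from the paper's. You pose the statement as a discrete Dirichlet problem with boundary $B = V \cup \{v\}$, reduce to the linear system $L_W x = b$ for the reduced Laplacian $L_W$ indexed by the interior $W = V(G)\setminus B$, and prove invertibility by showing the quadratic form $x^{\mathsf T}L_W x$ is positive definite; the key point, that every component of the induced subgraph on $W$ meets the boundary because $G$ is connected and $v \notin W$, is argued correctly, and your remark that loops drop out of $\dv$ is the right way to dispose of them. The paper instead builds $f$ as a linear combination $\sum_i f_i\, q_v(v_i)$ of the potential functions $q_v(v_i)$ satisfying $v = v_i + \dv(q_v(v_i))$ (whose existence is quoted from Baker--Faber), checks that these functions are linearly independent, and then runs a dimension count on the common kernel of the evaluation maps at the vertices of $V$ to find a combination taking the constant value $k$ on $V$. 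The two arguments ultimately rest on the same nondegeneracy of the Laplacian off a nonempty boundary set, but yours is more self-contained and explicit (it also yields uniqueness of $f$ given the boundary data, and an effective way to compute it), whereas the paper's reuses the potentials $q_v(v_i)$ that are already the main actors elsewhere in Section 4 and avoids redoing the positive-definiteness computation by outsourcing it to the cited existence result. Either proof is acceptable; yours would be a clean drop-in replacement.
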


\begin{proof}
    For any $v_i \in V$ let $q_{v}(v_i) \in C(G; \RR)$ denote the unique function such that $v = v_i + \dv\left(q_v(v_i)\right)$ and such that the minimal value of $q_{v}(v_i)$ is $0$, which is achieved at $v$. By construction, for any vertex $u$ different from $v$ and $v_i$, the sum of the outgoing slopes of $q_{v}(v_i)$ at $u$ is zero. Thus any linear combination \begin{equation}\label{eq:condition} f = \sum_i f_i q_{v}(v_i), \end{equation}
    $f_i \in \RR$ satisfies the second and the third claim. 

    To show that we can ensure also the first claim, we first observe that the $q_{v}(v_i)$ are linearly independent (in $C^0(G; \RR) \simeq \RR^{|V(G)|}$). Indeed, let $L$ be the linear span of the $q_{v}(v_i)$. Then $\dv(\cdot )$ induces a surjective linear map \[L \to \RR^{|V|}, \, f \mapsto \left(\dv(f)_{v_1}, \ldots, \dv(f)_{v_{|V|}}\right).\] Since there are $|V|$ functions $q_{v}(v_i)$, this shows that they need to be linearly independent. 

    Now let $\RR^{|V| + 1}$ be the vector space spanned by 
    the constant function with vaue $1$, $\mathbb 1_{V(G)}$, and the $q_{v}(v_i)$. Let $\mathrm{ev}_i \colon \RR^{|V| + 1} \to \RR$ be the linear map given by evaluation at $v_i$ and let $K$ be the intersection \[K = \bigcap_i \ker(\mathrm{ev}_i).\] Thus $K$ is a linear subspace of dimension at least $1$. 
    
    As we observed, any linear combination of the $q_{v}(v_i)$ satisfies the second and third claim. It follows that if such a linear combination vanishes on all vertices in $V$, then it vanishes on all vertices in $V(G)$. Hence we have $K \cap \{x_1 = 0\} = \{0\}$, where  $x_1$ is the coordinate of $\RR^{|V| + 1}$ that corresponds to the constant function $\mathbb 1_{V(G)}$. But then $\dim(K) = 1$ and also $K \cap \{x_1 = -k\}$ is a point, and in particular not empty, since $\{x_1 = -k\}$ is a translate of $\{x_1 = 0\}$. By construction, forgetting the $x_1$-coordinate of any point in $K \cap \{x_1 = -k\}$ gives a linear combination $f$ of the $q_{v}(v_i)$ that satisfies the first claim.
\end{proof}

\begin{rmk} \label{rmk:slopes}
    The third claim in Lemma~\ref{lma: interpolating function} in fact does not just hold for vertices $u \neq v$ and $u \not \in V$, but holds also for vertices $u \in V$ such that there is no simple path from $v$ to $u$ that does not contain any other vertex of $V$. Indeed, the first and third claim imply that $f$ needs to have constant value $k$ on any connected component of $G - V$ that does not contain $v$. Hence $f$ has value $k$ on all vertices adjacent to a vertex $u$ as above. Since $f$ has value $k$ also on $u$, this implies the third claim.   
\end{rmk}

\begin{lma} \label{lma:zero set}
     Let $V \subset V(G)$ be a set of vertices and $\mathcal D$ an $\RR$-divisor of degree $d$, effective away from $V$. Let $\mathcal E$ be an $\RR$-divisor of degree $d$ supported on $V$. Then 
     \begin{enumerate}
         \item $U_0(q_{\mathcal E}(\mathcal D)) \cap V \neq \emptyset$
         \item If $v \in U_0(q_{\mathcal E}(\mathcal D))$ and $v \not \in V$, then $U_0(q_{\mathcal E}(\mathcal D))$ contains the entire connected component $G'$ of $G - V$ that contains $v$, as well as any vertex in $V$ adjacent to a vertex in $G'$.
     \end{enumerate}
\end{lma}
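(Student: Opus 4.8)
The plan is to isolate a single \emph{spreading principle} and deduce both parts from it. Write $f = q_{\mathcal E}(\mathcal D)$, so that $\dv(f) = \mathcal E - \mathcal D$, $f \geq 0$, and $U_0(f)$ is exactly the set of vertices where $f$ attains its minimum value $0$. The spreading principle is: if $u \in U_0(f)$ and $u \notin V$, then every vertex adjacent to $u$ also lies in $U_0(f)$. To prove it, observe that $\mathcal E_u = 0$ because $\mathcal E$ is supported on $V$, and $\mathcal D_u \geq 0$ because $\mathcal D$ is effective away from $V$; hence $\dv(f)_u = \mathcal E_u - \mathcal D_u \leq 0$. On the other hand $\dv(f)_u = \sum_{e = uw}\bigl(f(w) - f(u)\bigr) \geq 0$, since every summand is nonnegative ($f(u) = 0$ is the global minimum of $f$). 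Therefore $\dv(f)_u = 0$ and every summand vanishes, i.e.\ $f(w) = 0$ for each neighbour $w$ of $u$.

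For part (1) I would argue by contradiction. If $U_0(f) \cap V = \emptyset$, then every vertex of $U_0(f)$ lies outside $V$, so by the spreading principle $U_0(f)$ is closed under passing to neighbours; since $G$ is connected this forces $U_0(f) = V(G)$. But $V$ is non-empty by the standing assumption, so $U_0(f) \cap V = V \neq \emptyset$, a contradiction.

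For part (2), fix $v \in U_0(f)$ with $v \notin V$ and let $G'$ be the connected component of $G - V$ containing $v$. Every vertex $u$ of $G'$ is joined to $v$ by a path $v = u_0, u_1, \dots, u_m = u$ lying entirely in $G'$, hence with all $u_i \notin V$. Applying the spreading principle successively along this path --- at step $i$ we have $u_i \in U_0(f)$ and $u_i \notin V$, hence $u_{i+1} \in U_0(f)$ --- shows $u = u_m \in U_0(f)$. Thus $G' \subseteq U_0(f)$. Finally, if $w \in V$ is adjacent to some $u \in G'$, then $u \in U_0(f) \setminus V$ and the spreading principle at $u$ yields $w \in U_0(f)$.

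There is no real obstacle here once the spreading principle is extracted; the only points needing care are the sign bookkeeping $\dv(f)_u = \mathcal E_u - \mathcal D_u \leq 0$ against $\dv(f)_u \geq 0$ at a minimum of $f$, and the (harmless) invocation of $V \neq \emptyset$ in part (1).
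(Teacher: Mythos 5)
Your proof is correct and follows essentially the same route as the paper: both rest on the observation that at a minimum $u \notin V$ of $f = q_{\mathcal E}(\mathcal D)$ one has $\dv(f)_u = \mathcal E_u - \mathcal D_u \leq 0$ while the slope sum at a minimum is $\geq 0$, forcing all neighbours into $U_0(f)$, and then propagate along paths; the paper phrases part (1) as propagation until a vertex of $V$ is reached rather than by contradiction, but this is only a cosmetic difference.
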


\begin{proof}
    Recall that $q_{\mathcal E}(\mathcal D)$ is normalized so that the vertices at which it is $0$ are precisely those at which it achieves its minimum value and that we have by definition \[\mathcal E = \mathcal D + \dv \left(q_{\mathcal E}(\mathcal D)\right) .\] Let $v \in V(G) \setminus V$. If $v$ is a local minimum of $q_{\mathcal E}(\mathcal D)$, this means that $\mathcal D_v \leq \mathcal E_v$. Since $\mathcal D$ is effective outside $V$, we have $0 \leq \mathcal D_v$; since $\mathcal E$ is supported on $V$, we have $\mathcal E_v = 0$. It follows that \[\mathcal E_v = \mathcal D_v = 0.\]
    Thus the sum of the outgoing slopes of $q_{\mathcal E}(\mathcal D)$ at $v$ is $0$. Since $v$ is assumed to be a local minimum of $q_{\mathcal E}(\mathcal D)$, this shows that the value of $q_{\mathcal E}(\mathcal D)$ needs to be the same on $v$ as on all vertices adjacent to $v$. Repeating this procedure for vertices in $V(G) \setminus V$ that are adjacent to $v$ shows the second claim. For the first claim, observe that since $G$ is connected, eventually a vertex contained in $V$ needs to be adjacent to the vertex $v$.
\end{proof}

\begin{lma} \label{lma:R divisor}
    Let $V \subset V(G)$ be a set of vertices and $\mathcal D$ an $\RR$-divisor of degree $d$, effective away from $V$. Then there exists an $\RR$-divisor $\mathcal E$ of degree $d$, supported on $V$ and such that $q_{\mathcal E}(\mathcal D)$ has value $0$ on all vertices in $V$. 
\end{lma}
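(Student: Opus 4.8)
The goal is to produce, from an $\RR$-divisor $\mathcal D$ of degree $d$ that is effective away from $V$, an $\RR$-divisor $\mathcal E$ of degree $d$, supported on $V$, whose associated normalized function $q_{\mathcal E}(\mathcal D)$ vanishes on all of $V$. The natural strategy is to \emph{exhibit the function first and then define $\mathcal E$ by it}. Concretely, I would look for an $\RR$-valued function $f \in C^0(G;\RR)$ that is constant on $V$ and such that $\mathcal D + \dv(f)$ is still supported-correctly: it must be supported on $V$, i.e. $(\mathcal D + \dv(f))_u = 0$ for all $u \notin V$. Then setting $\mathcal E \coloneqq \mathcal D + \dv(f)$ gives an $\RR$-divisor of degree $d$ (degree is preserved by $\dv$) supported on $V$, and $f$ is the function with $\mathcal E = \mathcal D + \dv(f)$; after normalizing $f$ by subtracting its minimum we still have that it is constant on $V$. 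If we can arrange that this minimum is \emph{attained on $V$}, then the normalized function is $0$ on all of $V$, which is exactly $q_{\mathcal E}(\mathcal D)$ vanishing on $V$.

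**Carrying it out.** First I would fix a vertex $v \in V$ and apply Lemma~\ref{lma: interpolating function} with $V$ replaced by $V \setminus \{v\}$ and the chosen $v$: this produces, for any $k \in \RR$, a function $h_k$ with value $k$ on $V \setminus \{v\}$, value $0$ on $v$, and zero net outgoing slope at every vertex $u \notin V$. However, I actually want the net slope at $u \notin V$ to absorb $-\mathcal D_u$, not to be zero. So the cleaner route is: since $\mathcal D$ and, say, the $\RR$-divisor $\mathcal D' \coloneqq (\sum_{u \notin V} \mathcal D_u \text{ redistributed onto } V)$ — more precisely any $\RR$-divisor $\mathcal D''$ of degree $d$ supported on $V$ — have the same degree, by \cite[Theorem 6]{BF06} there is a function $g$ with $\mathcal D'' = \mathcal D + \dv(g)$, unique up to an additive constant. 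The issue is that $g$ need not be constant on $V$. To fix this, I would \emph{modify $\mathcal D''$}, exploiting the freedom in choosing which $\RR$-divisor supported on $V$ to target: use Lemma~\ref{lma: interpolating function} (with basepoint $v$) to build a correction function that is constant on $V \setminus \{v\}$, zero at $v$, harmonic off $V$, and whose value on $V \setminus \{v\}$ is chosen so that adding its divisor to $\mathcal D''$ flattens $g$ to a constant on $V$. Equivalently — and this is the conceptually simplest phrasing — I would directly seek $f$ of the form $f = g_0 + \sum_i c_i\, q_v(v_i)$ where $g_0$ is \emph{any} function with $\mathcal D + \dv(g_0)$ supported on $V$ and the $q_v(v_i)$ are the functions from the proof of Lemma~\ref{lma: interpolating function}; adding $\sum c_i q_v(v_i)$ keeps $\mathcal D + \dv(f)$ supported on $V$ (these divisors are supported on $\{v, v_i\} \subset V$) and changes $f$ on $V$ without changing it at $v$, and by the linear-algebra argument in Lemma~\ref{lma: interpolating function} (the evaluation map $L \to \RR^{|V|}$ restricted to $V \setminus \{v\}$ is surjective, combined with the value at $v$ being pinned) we can solve for the $c_i$ to make $f$ constant on $V$.

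**Finishing: the normalization.** Once $f$ is constant on $V$ and $\mathcal E \coloneqq \mathcal D + \dv(f)$ is supported on $V$ and has degree $d$, I must check $q_{\mathcal E}(\mathcal D)$ — the normalized version of $f$ — vanishes on $V$. This means showing $\min_{u \in V(G)} f(u)$ is attained at some vertex of $V$; then subtracting that constant (which changes neither $\dv(f)$ nor constancy on $V$) yields a function that is $0$ on all of $V$ and $\geq 0$ elsewhere, hence equals $q_{\mathcal E}(\mathcal D)$. Here is where Lemma~\ref{lma:zero set}(1) does the work: it says $U_0(q_{\mathcal E}(\mathcal D)) \cap V \neq \emptyset$ whenever $\mathcal E$ is supported on $V$ and $\mathcal D$ is effective away from $V$ — which is precisely our situation. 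So $q_{\mathcal E}(\mathcal D)$ attains its minimum $0$ at some vertex of $V$; but $q_{\mathcal E}(\mathcal D)$ and our constant-on-$V$ $f$ differ by a constant, so $f$ is constant on $V$ with that constant equal to its global minimum, forcing $q_{\mathcal E}(\mathcal D) \equiv 0$ on $V$.

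**Main obstacle.** The delicate point is the solvability step: arranging that the correction $\sum_i c_i q_v(v_i)$ can flatten $f$ to a constant on all of $V$ while leaving $\mathcal D + \dv(f)$ supported on $V$ — i.e. that the relevant evaluation/restriction map is onto the right affine subspace. This is exactly the dimension count carried out inside the proof of Lemma~\ref{lma: interpolating function} (linear independence of the $q_v(v_i)$ via surjectivity of $f \mapsto (\dv(f)_{v_1},\dots,\dv(f)_{v_{|V|}})$), so I would either invoke that lemma in the strengthened form noted in Remark~\ref{rmk:slopes} or rerun its short linear-algebra argument. Everything after that — degree preservation, support, and the vanishing-on-$V$ normalization via Lemma~\ref{lma:zero set}(1) — is routine.
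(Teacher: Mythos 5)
Your proof is correct, but it takes a genuinely different route from the paper's. The paper proceeds by induction on the number of vertices in $V \setminus U_0(q_{\mathcal E}(\mathcal D))$: starting from an arbitrary $\mathcal E$ supported on $V$, it perturbs $\mathcal E$ by $\dv(f_k)$ with $f_k$ from Lemma~\ref{lma: interpolating function} and lets $k$ grow continuously until the minimum of $q_{\mathcal E_k}(\mathcal D)$ is attained at additional vertices, which by Lemma~\ref{lma:zero set} must include a new vertex of $V$; iterating absorbs all of $V$ into the zero set. You instead build $\mathcal E$ in one shot: take any $g_0$ with $\mathcal D + \dv(g_0)$ supported on $V$, then correct within the span of the $q_v(v_i)$ (whose divisors $v - v_i$ are supported on $V$, so the support condition is preserved) to make the resulting function constant on $V$, and finally invoke Lemma~\ref{lma:zero set}(1) to see that this constant is the global minimum, so the normalization $q_{\mathcal E}(\mathcal D)$ vanishes on all of $V$. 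The solvability of your linear system is exactly the invertibility of the evaluation matrix $\bigl(q_v(v_i)(v_j)\bigr)$, which follows from the same maximum-principle/dimension-count argument as in the proof of Lemma~\ref{lma: interpolating function} (a combination harmonic off $V$ and vanishing on $V$ vanishes identically, hence has zero coefficients); note you need this slightly stronger ``arbitrary prescribed values on $V\setminus\{v\}$'' version rather than the constant-value statement of the lemma as written, but you correctly flag this and the same proof gives it. Your argument is arguably more direct, trading the paper's induction and continuity-in-$k$ argument for a single linear-algebra step; the paper's version uses Lemma~\ref{lma: interpolating function} exactly as stated and makes the gradual enlargement of the zero set explicit.
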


\begin{proof}
    We show the claim by induction on the number of vertices in $V - U_0\left(q_{\mathcal E}(\mathcal D)\right)$. 
    The base case is $V - U_0\left(q_{\mathcal E}(\mathcal D)\right) = \emptyset$, in which case the claim holds by definition. 

    For the inductive step, let $v \in V - U_0\left(q_{\mathcal E}(\mathcal D)\right)$ be a vertex in $V$ on which $q_{\mathcal E}(\mathcal D)$ is not zero and hence positive. By Lemma~\ref{lma: interpolating function} there exists an $\RR$-valued function $f_k \in C(G; \RR)$ that has value $k$ on $U_0\left(q_{\mathcal E}(\mathcal D)\right)$, value $0$ on $v$ and such that the sum of the outgoing slopes of $f_k$ is $0$ at any vertex $u$ different from $v$ and not contained in $U_0\left(q_{\mathcal E}(\mathcal D)\right)$. Set \[\mathcal E_k = \mathcal E + \dv(f_k). \]
    We claim that $\mathcal E_k$ is still supported on $V$. Indeed, by construction of $f_k$ and Remark~\ref{rmk:slopes}, the only vertices $u$ such that $(\mathcal{E}_k)_u \neq \mathcal{E}_u$ are $u = v$ and vertices in $U_0\left(q_{\mathcal E}(\mathcal D)\right)$ that are connected to $v$ by a simple path that does not contain any other vertex in $U_0\left(q_{\mathcal E}(\mathcal D)\right)$. But $v \in V$ by assumption and any vertex satisfying the latter property is contained in $V$ by Lemma~\ref{lma:zero set}. 

    Since $U_0\left(q_{\mathcal E}(\mathcal D)\right)$ is the set of vertices on which $q_{\mathcal E}(\mathcal D)$ has minimal value (equal to $0$), $q_{\mathcal E} + f_k$ still has minimal value (equal to $k$) on $U_0\left(q_{\mathcal E}(\mathcal D)\right)$ as long as $k$ is positive but small enough.
    It follows that \[q_{\mathcal E_k}(\mathcal D) = q_{\mathcal E}(\mathcal D) + f_k - k.\] 
    Increasing $k$ thus at first preserves $U_0\left(q_{\mathcal E_k}(\mathcal D)\right)$ until eventually the minimum is achieved at additional vertices (the values of $q_{\mathcal E_k}(\mathcal D)$ vary continuously with $k$). By Lemma~\ref{lma:zero set} at least one of these new vertices needs to be contained in $V$ and this gives the inductive step.  
\end{proof}

\subsection{The relation between \texorpdfstring{$\mathcal E$}{E}- and \texorpdfstring{$V$}{V}-reduced divisors}

We are now ready to state and prove the main result of this section: 

\begin{thm} \label{thm: equivalence reducedness}
    A divisor $\md$ of degree $d$ is $V$-reduced if and only if there exists an $\RR$-divisor $\mathcal E$ of degree $d$ which is supported on $V$ and such that $\md$ is $\mathcal E$-reduced. 
\end{thm}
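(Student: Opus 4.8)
The plan is to prove the two implications separately, using the auxiliary lemmas from Section~\ref{sec:potential} for one direction and the Dhar decomposition machinery from Section~\ref{sec:Dhar} for the other.

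For the easier direction, suppose $\mathcal E$ is an $\RR$-divisor of degree $d$ supported on $V$ and $\md$ is $\mathcal E$-reduced; I want to show $\md$ is $V$-reduced. By definition $\md$ is effective away from $V$, so it remains to check the chip-firing condition. Suppose for contradiction that there is a set $W \subset V(G) \setminus V$ such that the divisor $\md' = \md + \dv(\mathbb 1_W)$ obtained by chip-firing along $W$ is still effective away from $V$. The key is to compare $\sum_v q_{\mathcal E}(\md')(v)$ with $\sum_v q_{\mathcal E}(\md)(v)$. By Lemma~\ref{lma:zero set}(1) applied to $\mathcal D = \md'$, we have $U_0(q_{\mathcal E}(\md')) \cap V \neq \emptyset$; since $W \cap V = \emptyset$, this gives $U_0(q_{\mathcal E}(\md')) \not\subset W$. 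Hence Corollary~\ref{cor:chip firing dynamics} (with $V' = W$, and with the roles of $\md,\md'$ there played by $\md',\md$ here — chip-firing along $W$ from $\md$ to $\md'$ means chip-firing along $V(G)\setminus W$ from $\md'$ to $\md$; one must be a little careful and instead apply the corollary directly in the form: $\md' = \md + \dv(\mathbb 1_W)$, so $\sum_v q_{\mathcal E}(\md)(v) = |W| + \sum_v q_{\mathcal E}(\md')(v) > \sum_v q_{\mathcal E}(\md')(v)$ provided $W \neq \emptyset$). This contradicts minimality of $\sum_v q_{\mathcal E}(\md)(v)$ among divisors linearly equivalent to $\md$ and effective away from $V$. (If $W = \emptyset$ the chip-firing condition is vacuous.) So $\md$ is $V$-reduced.

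For the harder direction, suppose $\md$ is $V$-reduced; I must produce an $\RR$-divisor $\mathcal E$ of degree $d$ supported on $V$ with $\md$ being $\mathcal E$-reduced. The natural candidate comes from Lemma~\ref{lma:R divisor} applied to $\mathcal D = \md$: it gives an $\mathcal E$ supported on $V$ with $q_{\mathcal E}(\md)$ vanishing on all of $V$ (hence $\sum_v q_{\mathcal E}(\md)(v) = \sum_{v \notin V} q_{\mathcal E}(\md)(v)$). I then need to show this $\mathcal E$ actually makes $\md$ an $\mathcal E$-reduced divisor, i.e.\ that $\sum_v q_{\mathcal E}(\md)(v)$ is minimal among all $\md' \sim \md$ effective away from $V$. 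Suppose not: let $\md'$ be such a divisor with strictly smaller value of $\sum_v q_{\mathcal E}(\md')(v)$, and let $f \in C^0(G;\ZZ)$ with $\md' = \md + \dv(f)$, normalized so $\min f = 0$. The idea is to run an argument parallel to the proof of Proposition~\ref{prop:Dhar decomp effective}: stratify $f$ by its level sets $F_i = \{v : f(v) \geq i\}$, so that $\md'$ is reached from $\md$ by successively chip-firing along $F_k \subset F_{k-1} \subset \cdots$. One shows that passing from $\md$ to $\md + \dv(\mathbb 1_{F_k})$ cannot decrease $\sum_v q_{\mathcal E}(\cdot)(v)$ — in fact, using Corollary~\ref{cor:chip firing dynamics} together with the fact that (by $V$-reducedness, via Lemma~\ref{lma: decomp reduced}, $\W(\md,V) = \emptyset$) the top level set $F_k$ cannot leave $\md$ effective away from $V$, so $F_k$ is \emph{not} contained in $\W(\md,V)$-type sets and hence chip-firing along $F_k$ strictly increases the potential sum — to derive a contradiction. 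More precisely: since $\md$ is $V$-reduced, chip-firing along any nonempty $W \subset V(G)\setminus V$ produces a negative value in $W$; but this alone does not obstruct $f$ from having support meeting $V$. The real point is a monotonicity statement: for any $\md'$ obtained from $\md$ by a sequence of chip-firing moves along sets $W_1, \dots, W_m$ where each $W_j \subset V(G) \setminus V$ and each intermediate divisor is effective away from $V$ — impossible for $W_j \neq \emptyset$ by $V$-reducedness — we would get a strict decrease. One has to handle the general $f$ (not necessarily supported off $V$) by splitting into the part of $f$ on $V$ and off $V$, using that $q_{\mathcal E}(\md)$ vanishes on $V$.

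The main obstacle I anticipate is precisely this last point in the harder direction: controlling $\sum_v q_{\mathcal E}(\md')(v)$ when the comparison function $f$ is supported partly on $V$, where chip-firing moves are not constrained by $V$-reducedness. The clean way around it is probably to invoke uniqueness of the $\mathcal E$-reduced representative (noted in the remark after the definition, from \cite[Theorem 5.3]{L13}): let $\md''$ be \emph{the} $\mathcal E$-reduced divisor in $\delta$, show $\md''$ is $V$-reduced by the first implication, and then argue that $\md$ and $\md''$ must coincide — using that $\W(\md,V) = \emptyset$ and $\W(\md'',V) = \emptyset$ together with a transport argument (chip-fire from $\md$ towards $\md''$ along level sets of the connecting function; the first nonempty level set disjoint from $V$ would violate $V$-reducedness of $\md$, while level sets meeting $V$ must be handled by Lemma~\ref{lma:zero set} and the vanishing of $q_{\mathcal E}(\md)$ on $V$). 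I would structure the final writeup around whichever of these two routes — direct potential estimate versus uniqueness plus rigidity — turns out to need fewer case distinctions, but I expect the uniqueness route to be cleanest.
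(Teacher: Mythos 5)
Your first direction ($\mathcal E$-reduced $\Rightarrow$ $V$-reduced) is correct and is exactly the paper's argument: Lemma~\ref{lma:zero set}(1) applied to $\md'$ gives $U_0(q_{\mathcal E}(\md'))\not\subset W$, and Corollary~\ref{cor:chip firing dynamics} then forces a strict decrease of the potential sum, contradicting $\mathcal E$-reducedness. The choice of $\mathcal E$ in the converse direction (Lemma~\ref{lma:R divisor}, so that $q_{\mathcal E}(\md)$ vanishes on all of $V$) is also the paper's.

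The gap is in the converse direction, and it is precisely the point you flag as "the main obstacle": you never actually handle a comparison function $f$ whose support meets $V$, and neither of your two proposed routes closes this. The missing idea is a dichotomy on $U_0(f)\cap U_0(q_{\mathcal E}(\md))$, writing $\md = \md' + \dv(f)$ with $\min f = 0$. If this intersection is \emph{nonempty}, Lemma~\ref{lma:chip firing dynamics} gives the pointwise identity $q_{\mathcal E}(\md') = q_{\mathcal E}(\md) + f \geq q_{\mathcal E}(\md)$, which immediately contradicts the assumed strict decrease of $\sum_v q_{\mathcal E}(\md')(v)$ --- no level-set analysis and no monotonicity claim is needed in this case. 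If the intersection is \emph{empty}, then the defining property of $\mathcal E$ (namely $V\subset U_0(q_{\mathcal E}(\md))$) forces $U_0(f)\cap V=\emptyset$; writing $\md' = \md + \dv(-f + k\cdot\mathbb 1_{V(G)})$ with $k=\max f$, the top level set of $-f+k$ is exactly $U_0(f)$, it receives no chips along the decomposition, so chip-firing $\md$ along it stays effective away from $V$ while being disjoint from $V$ --- contradicting $V$-reducedness directly, with no potential estimate at all. Your sketch instead tries to prove that chip-firing along the top level set "strictly increases the potential sum" via Corollary~\ref{cor:chip firing dynamics}; note that corollary says the sum \emph{decreases} by $|V'|$ under a chip-firing move along $V'$ (when its hypothesis holds), so the monotonicity statement you want is not what it provides, and in any case the corollary's hypothesis concerns $U_0$ of the potential of the resulting divisor, not $V$-reducedness.

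Your fallback via uniqueness of the $\mathcal E$-reduced representative does not avoid the difficulty: proving that $\md$ coincides with the unique minimizer $\md''$ is the same problem in disguise, and since $V$-reduced divisors are not unique in their class (as the paper notes), knowing that both $\md$ and $\md''$ are $V$-reduced yields nothing; the "transport argument" you would need there is exactly the two-case analysis above. So the proposal has the right skeleton and the right auxiliary lemmas, but the decisive step --- the case split on $U_0(f)\cap U_0(q_{\mathcal E}(\md))$ and the use of Lemma~\ref{lma:chip firing dynamics} in the nonempty case --- is absent.
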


\begin{proof}
    Assume that $\md$ is $\mathcal E$-reduced for some $\RR$-divisor $\mathcal E$ of degree $d$ which is supported on $V$. If $\md$ is not $V$-reduced, there exists a subset $V'$ of vertices, such that $V' \cap V = \emptyset$ and the divisor $\md'$ obtained from $\md$ by a chip-firing move along $V'$ is still effective outside of $V$. By Lemma~\ref{lma:zero set}, $U_0(q_{\mathcal E}(\md')) \not \subset V'$, that is, $V'$ does not contain all vertices on which $q_{\mathcal E}(\md')$ achieves its minimum. By Corollary~\ref{cor:chip firing dynamics}, this implies \[\sum_{v \in V(G)} q_{\mathcal E}(\md')(v) < \sum_{v \in V(G)} q_{\mathcal E}(\md)(v),\] which contradicts the assumption that $\md$ is $\mathcal E$ reduced.

    \medskip
    
    Now assume conversely that $\md$ is $V$-reduced. In particular, $\md$ is effective away from $V$. Let $\mathcal E$ be the $\RR$-divisor constructed in Lemma~\ref{lma:R divisor}. That is, $\mathcal E$ is supported on $V$, has degree $d$ and $q_{\mathcal E}(\md)$ has value $0$ on all vertices in $V$. We claim that $\md$ is $\mathcal E$-reduced.   

    Indeed, suppose there is another divisor $\md' \sim \md$ such that $\md'$ is effective away from $V$ and \begin{equation} \label{eq:E reduced} \sum_{v \in V(G)} q_{\mathcal E}(\md')(v) < \sum_{v \in V(G)} q_{\mathcal E}(\md)(v).\end{equation}
    Set \[\md = \dv(f) + \md',\] where the minimal value of $f$ is $0$.
    
    If $U_0(f) \cap U_0(q_{\mathcal E}(\md)) \neq \emptyset$ we would get \[q_{\mathcal E}(\md')(v) =  q_{\mathcal E}(\md)(v) + f(v) \geq q_{\mathcal E}(\md)(v)\] for all $v \in V(G)$ by Lemma~\ref{lma:chip firing dynamics}, and hence a contradiction to \eqref{eq:E reduced}.
    
    So suppose $U_0(f) \cap U_0(q_{\mathcal E}(\md)) = \emptyset$. By the choice of $\mathcal E$ we have $V \subset U_0(q_{\mathcal E}(\md))$, and hence $U_0(f) \cap V = \emptyset$. 
    If $k$ is the maximal value of $f$, we have \[\md' = \md - \dv(f) = \md + \dv(-f) = \md + \dv\left(-f + k \cdot \mathbb 1_{V(G)}\right),\]
    where $-f + k \cdot \mathbb 1_{V(G)}$ is the function obtained from $-f$ by adding $k$ to its value at each vertex. Thus $-f + k \cdot \mathbb 1_{V(G)}$ has minimal value $0$ and maximal value $k$, the latter achieved exactly on vertices in $U_0(f)$.
    
    As in the proof of Proposition~\ref{prop:Dhar decomp effective}, write \[F_i = \left \{ v \in V(G) \mid -f(v) + k \cdot \mathbb 1_{V(G)} \geq i\right\}, \, i = 1, \ldots, k,\] such that \[-f + k \cdot \mathbb 1_{V(G)} = \sum_{i= l}^{k} \mathbb 1_{F_i}.\] 
    Since $\mathrm{div}(\mathbb 1_{F_i})$ encodes a chip-firing move along $F_i$, this means that $\md'$ is obtained from $\md$ by successively chip-firing along the $F_i$. In particular, $\md'_v \leq \md_v$ for all vertices $v \in F_k$ since $F_k \subset F_i$ for all $i \leq k$. Thus the divisor obtained from $\md$ by a chip-firing move along $F_k$ is effective away from $V$, since so is $\md'$. But we also have \[F_k \cap V = U_0(f) \cap V = \emptyset,\] which contradicts the assumption that $\md$ is $V$-reduced. 
\end{proof}

    To conclude this section we note that it is in general not possible to choose the $\RR$-divisor $\mathcal E$ in Theorem~\ref{thm: equivalence reducedness} with integer coefficients:

    \begin{ex} \label{ex:integer E}
        Consider the graph $G$ with $3$ vertices $v_1, v_2, v_3$ and four edges, two of them adjacent to $v_2$ and $v_1$, and two to $v_2$ and $v_3$ (see Figure~\ref{fig2}). Let $V = \{v_1, v_3\}$ and $\md = (1,3,2)$. Then $\md$ is $V$-reduced, since the only vertex not in $V$ is $v_2$ and a chip-firing move along $v_2$ turns $\md_{v_2}$ negative. 
        
        The $\RR$-divisor $\mathcal E$ constructed in the proof of Theorem~\ref{thm: equivalence reducedness} is $(2.5, 0, 3.5)$. Then $q_{\mathcal E}(\md)$ is $0$ on both $v_1$ and $v_3$ and has outgoing slope at $v_1$ and $v_3$ equal to $0.75$ along each edge. 
        
        On the other hand, if $\mathcal E$ has integer values we claim $\md$ cannot be $\mathcal E$-reduced. Indeed, if $\mathcal E = (3,0,3)$, then a chip-firing move along $\{v_2, v_3\}$ 
        produces the divisor $\md' = (3,1,2)$ and one checks that $\sum_{v \in V(G)} q_{\mathcal E}(\md)(v) > \sum_{v \in V(G)} q_{\mathcal E}(\md')(v)$. Thus $\md$ is not $\mathcal E$-reduced. If $\mathcal E \neq (3,0,3)$ but has integer values, degree $6$ and is supported on $V$, $\mathcal E$ has value at least $4$ on one of the vertices $v_1$ or $v_3$. A chip-firing move towards this vertex again decreases $\sum_{v \in V(G)} q_{\mathcal E}(\md)(v)$  showing that $\md$ is not $\mathcal E$-reduced also in this case.
    \end{ex}

    \tikzset{every picture/.style={line width=0.75pt}}
    
    \begin{figure}[ht] 
    \begin{tikzpicture}[x=0.7pt,y=0.7pt,yscale=-0.8,xscale=0.8]
    \import{./}{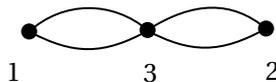}
    \end{tikzpicture}	
    \caption{
     The graph $G$ and divisor $\md$ from Example~\ref{ex:integer E}.
    } \label{fig2}
\end{figure}

\section{Uniform representatives}
\label{sec:uniform}

In this section, we recall the definition of uniform divisors and special divisor classes. Then we study under which conditions on $G$ any special divisor class contains a uniform representative. 

\subsection{Uniform divisors and special divisor classes}

Recall that $g_v$ denotes the weight of a vertex $v$ and $\val(v)$ denotes the valence of a vertex $v \in V(G)$, that is, the number of edges adjacent to $v$ with loops counted twice.
\begin{defn}
    The \emph{canonical divisor} $\mk$ on $G$ is the divisor with value on a vertex $v$ given by \[2g_v-2 + \val(v).\]
\end{defn}
The degree of the canonical divisor is $2g - 2$, where $g$ denotes the genus of the graph. Indeed, we have 
\begin{eqnarray*}
    \deg(\mk) &=& \sum_{v \in V(G)} \left(2 g_v - 2 + \val(v)\right) = 2 |E(G)| - 2 |V(G)| + 2 \sum_{v\in V(G)} g_v \\ &=& 2(|E(G)| - |V(G)| + 1 + \sum_{v \in V(G)} g_v) - 2 = 2g - 2 
\end{eqnarray*}

Given a divisor $\md \in \Div(G)$, its \emph{residual divisor} is  \[\md^* \coloneqq \mk - \md.\]
Similarly, if $\delta$ is the class of $\md$ in $\Pic(G)$, then the \emph{residual class} $\delta^*$ of $\delta$ is the class of the residual $\md^*$ of $\md$. It is easy to see, that this class does not depend on the choice of the representative $\md$ in $\delta$. 

\medskip

Recall that a divisor class $\delta$ is effective if it contains an effective representative.

\begin{defn}
    Let $G$ be a graph.
    \begin{enumerate}
        \item A divisor $\md \in \Div(G)$ is \emph{uniform}, if both $\md$ and its residual divisor $\md^*$ are effective.
        \item A divisor class $\delta \in \Pic(G)$ is \emph{special}, if both $\delta$ and its residual class $\delta^*$ are effective.
    \end{enumerate}
\end{defn}

Explicitly, a divisor $\md$ is uniform if for any vertex $v \in V(G)$, \[0 \leq \md_v \leq 2g_v - 2 + \val(v).\] 
We have $0 > 2g_v - 2 + \val(v)$ if and only if $g_v = 0$ and $\val(v) = 1$ (recall that we assume $G$ to be connected). Thus there exist uniform multidegrees on a graph $G$ if and only if $G$ does not contain any vertex $v$ with $g_v = 0$ and $\val(v) = 1$. Such graphs are called \emph{semistable}.

A divisor $\md$ with negative degree $d$ cannot be effective. Since the degree of the residual $\md^*$ is $2g - 2 - d$, it follows that the degree of a uniform divisor satisfies $0 \leq d \leq 2g - 2$. The same holds for the degree of a special divisor class.

Clearly, the class $\delta$ of a uniform divisor $\md$ is special.
The following example shows that the converse need not hold: 

\begin{ex} \label{ex:uniform}
    \emph{A special divisor class not containing a uniform representative.}
    Consider the graph $G$ with three vertices of weight $0$ and two edges between each pair of vertices (see Figure~\ref{fig1}). The valence of each vertex is $4$ and thus the canonical divisor on $G$ is $(2,2,2)$. 
    
    Then the divisor $\md = (0,0,3)$ is not uniform, since \[\md^* = \mk - \md = (2,2,-1)\] is not effective. 
    Furthermore, $\md$ is the only effective representative in its class $\delta$, and thus $\delta$ does not contain a uniform representative.
    
    But $\delta$ is special since \[\md^* = (2,2,-1) \sim (0,0,3)\] via a chip-firing move along the two vertices on which $\md^*$ has value $2$. 
\end{ex}

\begin{rmk}
    Our interest in uniform representatives comes from algebraic geometry. More precisely, results of \cite{Clifford} establish a connection between uniform representatives and line bundles on nodal curves that satisfy the classical Clifford inequality. See \cite{BCM} for a detailed discussion, and, for example, \cite{Caporasocompactification, Baker, CLM, Len17, CC, C24} for more general context on line bundles on nodal curves. 
\end{rmk}

\subsection{Uniform representatives in special divisor classes}

    We are ready to give the main result of this section, Theorem~\ref{thm:main}. Theorem A in the introduction is the residual formulation of Theorem~\ref{thm:main}, obtained by applying Theorem~\ref{thm:main} to the residual of a special class.  

    \begin{defn}
        For a vertex $v$ denote by $\vall(v)$ the number of non-loop edges adjacent to $v$.
    \end{defn}

    That is, \[\val(v) = \vall(v) + 2l, \]
    where $l$ denotes the number of loops adjacent to $v$.
    
    \begin{thm} \label{thm:main}
         Let $G$ be a graph and $\delta$ a special divisor class. Then $\delta$ contains a representative $\md$ such that for all vertices $v \in V(G)$ we have \begin{equation}\label{eqn:quasi uniform} 0 \leq \md_v \leq \max \left\{\vall(v) - 1, 2g_v - 2 + \val(v) \right\}.\end{equation} 
    \end{thm}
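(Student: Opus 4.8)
Write $m_v := \max\{\vall(v) - 1,\, 2g_v - 2 + \val(v)\}$ for the right-hand side of \eqref{eqn:quasi uniform}; this is non-negative whenever $G$ admits a special class (on an isolated vertex there is none), so since $0 \le \md_v$ is already part of what we want, it suffices to produce an \emph{effective} representative $\md \in \delta$ with $\md_v \le m_v$ for every vertex $v$. The plan is to run the effectiveness algorithm on a well-chosen representative of $\delta$ and to check that the pointwise bound ``$\le m$'' survives the run. Since $\delta$ is special, $\delta^*$ is effective; I would fix an effective $\me \in \delta^*$ and set $\md_0 := \mk - \me \in \delta$, so that $\md_0 \le \mk \le m$ pointwise and $\deg(\md_0) = \deg(\delta) \ge 0$. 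If $\md_0$ is already effective we are done. Otherwise run Algorithm~\ref{algorithm} on $\md_0$. Because $\delta$ is effective, the halting cases ``$V = V(G)$'' and ``$\W(\md_i, V_i) = \emptyset$'' cannot occur (the first forces $\deg(\delta) < 0$; in the second $\md_i$ would be $V_i$-reduced by Lemma~\ref{lma: decomp reduced} and not effective, so $\delta$ would not be effective by Proposition~\ref{prop:Dhar decomp effective}), so by the termination statement proved for Algorithm~\ref{algorithm} it stops at an effective representative $\md \in \delta$. Everything then hinges on the invariant: \emph{at every stage $i$, $\md_{i,v} \le m_v$ for all $v$}; granting it, the output $\md$ is effective and satisfies $\md_v \le m_v$, which is the theorem.

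To prove the invariant is preserved, recall the step of the algorithm: from $\md_i$, effective away from $V_i := \{v : \md_{i,v} < 0\}$, one forms the Dhar decomposition $V(G) = V_n \sqcup \W_i$ with $\W_i := \W(\md_i, V_i)$ and passes to $\md_{i+1} := \md_i + \dv(\mathbb 1_{\W_i})$. Write $|E(u,S)|$ for the number of edges between a vertex $u$ and a set $S$ not containing $u$. Vertices of $\W_i$ only lose chips, so $\md_{i+1,v} \le \md_{i,v} \le m_v$ there. For $u \notin \W_i$ the move adds exactly $|E(u,\W_i)|$ chips at $u$ (all joining $u$ to a distinct vertex, hence non-loop edges), so it remains to bound $\md_{i,u} + |E(u,\W_i)|$. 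If $u \in V_i$, then $\md_{i,u} \le -1$ and $|E(u,\W_i)| \le \vall(u)$, so $\md_{i+1,u} \le \vall(u) - 1 \le m_u$. If instead $u \in V_n \setminus V_i$, then $u$ was added to the Dhar chain at some stage: for the set $V'$ preceding that stage one has $u \notin V'$, $V_i \subseteq V' \subseteq V_n$, and the partial firing along $V(G)\setminus V'$ makes $\md_i$ strictly negative at $u$, i.e. $\md_{i,u} - |E(u,V')| < 0$, whence $\md_{i,u} \le |E(u,V')| - 1$. Now $V'\subseteq V_n$ is disjoint from $\W_i$ and $u$ lies in neither, so the edges from $u$ to $V'$ and from $u$ to $\W_i$ are two disjoint families of non-loop edges at $u$; hence $|E(u,V')| + |E(u,\W_i)| \le \vall(u)$, and combining gives $\md_{i+1,u} = \md_{i,u} + |E(u,\W_i)| \le \vall(u) - 1 \le m_u$. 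This establishes the invariant.

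Putting the pieces together: $\md_0 \le m$ by construction, the invariant persists along the algorithm, and the algorithm outputs an effective $\md \in \delta$, which therefore satisfies $0 \le \md_v \le m_v$ for all $v$. (Theorem~\ref{thm: existence special} then drops out as the residual statement: applying the above to the special class $\delta^*$ yields $\me \in \delta^*$ with $0 \le \me_v \le m_v$, and then $\md := \mk - \me \in \delta$ satisfies $\mk_v - m_v \le \md_v \le \mk_v$, where $\mk_v - m_v$ equals $0$ unless $g_v = 0$ and no loop is adjacent to $v$, in which case it equals $-1$.) The step I expect to be the real obstacle is the case $u \in V_n \setminus V_i$ of the invariant: one has to read off from the Dhar iteration the sharp inequality $\md_{i,u} \le |E(u,V')| - 1$ for the precise preceding set $V'$, and to notice that $V'$ and $\W_i$ are disjoint so the two edge-counts at $u$ cannot together exceed $\vall(u)$ — this is exactly what prevents the incoming chips from pushing $\md_{i,u}$ past $\vall(u) - 1$. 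The remaining verifications (that the halting/failure cases do not arise, and the $u \in V_i$ and $u \in \W_i$ cases) are routine.
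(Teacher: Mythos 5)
Your proposal is correct and follows essentially the same route as the paper: take $\mk$ minus an effective representative of $\delta^*$, run Algorithm~\ref{algorithm}, and check that each chip-firing move along a Dhar set preserves the upper bound, with the decisive case ($u \notin V_i$, $u \notin \W_i$, $u$ adjacent to $\W_i$) handled by reading off $\md_{i,u} \le |E(u,V')| - 1$ from the Dhar iteration and using that the edges to $V'$ and to $\W_i$ are disjoint non-loop edges at $u$. The paper packages this last step as $\me_v \le k^c - 1$ with $k^c = \vall(v) - k$, but the content is identical.
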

    
    \begin{proof}
    Let $\delta^*$ denote the residual class of $\delta$. By assumption, $\delta^*$ is effective and we may choose an effective representative $\me^*$ in $\delta^*$. By definition, this means that $\me \in \delta$ satisfies \begin{equation} \label{eq:upper bound} \me_v \leq 2 g_v - 2 + \val(v)\end{equation}
    for all vertices $v \in V(G)$.
    Thus if $\me$ is effective, we are done. Otherwise, denote by $V \subset V(G)$ the subset of vertices at which $\me$ fails to be effective. Since $\delta$ is assumed to be effective, we need to have $d \geq 0$ and hence $V \neq V(G)$. 

    We now run Algorithm~\ref{algorithm} on $\me$. Recall that in the algorithm an effective representative of $\delta$ is constructed from $\me$ by repeated chip-firing moves along Dhar sets $\W(\me, V)$. By construction of $\W(\me, V)$, the divisor $\me'$ obtained from $\me$ by a chip-firing move along $\W(\me, V)$ remains effective away from $V$. Since we assume that $\delta$ is effective and already showed that Algorithm~\ref{algorithm} terminates, it suffices to show that a chip-firing move along $\W(\me, V)$ preserves the upper bound in \eqref{eqn:quasi uniform} (though not necessarily the upper bound for being uniform, \eqref{eq:upper bound}).

    Any vertex $v$ not adjacent to a vertex in $\W(\me, V)$ is not affected by the chip-firing move and hence $\me_v = \me'_v$. Since $\me$ satisfies the upper bound on $v$, so does $\me'$ in this case. Similarly, if $v \in \W(\me, V)$, then $\me_v' \leq \me_v$ and the bound is still satisfied for $\me'$ on $v$.
    
    So suppose $v$ is adjacent to a vertex in $\W(\me, V)$ but not contained in $\W(\me, V)$. If $v \in V$, we have $\me_v < 0$ by the definition of $V$. Since $\me'$ is obtained from $\me$ by a single chip-firing move, we have in this case \[\me_v' \leq \me_v + \vall(v) \leq \vall(v) - 1,\]
    and hence $\me'$ satisfies the claimed upper bound \eqref{eqn:quasi uniform} on $v$.
    
    Finally, the remaining case is $v \not \in V$, $v \not \in \W(\me, V)$ and $v$ adjacent to $\W(\me, V)$. Denote by $k$ the number of edges between $v$ and vertices in $\W(\me, V)$, and by $k^c = \vall(v) - k$ the number of non-loop edges adjacent to $v$ but not to a vertex in $\W(\me, V)$. Thus \begin{equation} \label{eq:sum}
        \me'_v = \me_v + k.
    \end{equation} Since $v$ is not contained in $\W(\me, V)$ and not contained in $V$, we have by construction of the Dhar decomposition that there is a subset of vertices $W_i \subset V(G)$ containing $v$, such that a chip-firing move along $W_i$ turns $\me_v$ negative. Since $\W(\me, V)$ is contained in $W_i$, this implies in particular that \[\me_v \leq k^c - 1.\] 
    Together with \eqref{eq:sum} this gives \[\me'_v = \me_v + k \leq k^c - 1 + k = \vall(v) - 1,\]
    which shows the claimed upper bound for $\me'$ at $v$. 
\end{proof} 

\begin{rmk}
    The upper bound for $\md_v$ in Theorem~\ref{thm:main}, $\max \left\{\vall(v) - 1, 2g_v - 2 + \val(v) \right\}$, is clearly weaker than the upper bound required for uniform divisors. However, the only case in which $\vall(v) - 1 > 2g_v - 2 + \val(v)$ is if $g_v = 0$ and $\vall(v) = \val(v)$; that is, if $v$ has weight zero and no adjacent loop. In this case, the upper bound in Theorem~\ref{thm:main} is higher by $1$ compared to the upper bound for uniform divisors. In all other cases, the bound in the theorem coincides with that for uniform divisors. Thus if every vertex of $G$ with $g_v = 0$ is adjacent to a loop, then every special divisor class contains a uniform representative. 
\end{rmk}

\begin{rmk} \label{rmk:partial orientations}
    An alternative proof of Theorem~\ref{thm:main} can be given using Spencer Backman's theory of partial graph orientations \cite{B17}. We give a sketch of this construction: One may assume that $g_v = 0$ for all vertices $V(G)$ by working on a weightless graph $G'$ obtained from $G$ by adding at each vertex $g_v$ loops. Then linear equivalence on $G$ and $G'$ coincide. Suppose first that we have $d \leq g - 1$. Since $\md$ is effective, we have by \cite[Theorem 4.10]{B17} that $\md$ is given, up to linear equivalence, by a sourceless partial orientation. This means that $0 \leq \md_v \leq \val(v) - 1 + g_v$ with equality in the upper bound only possible when there are no loops based at $v$ and $g_v = 0$. Similarly, for  $d \geq g - 1$, one can use \cite[Algorithm 4.3]{B17} to show that the residual $\md^*$ can be given, up to linear equivalence, by a sinkless partial orientation. Furthermore, this partial orientation can be arranged to have no sources at vertices that are adjacent to a loop edge. It follows that $-1 \leq \md^*_v \leq \val(v) - 2 + g_v$ where the lower bound can only be achieved at weightless vertices not adjacent to any loop edges. This in turn gives $g_v \leq \md_v \leq 2 g_v - 1 + \val(v)$, where equality in the upper bound again is only possible when there are no loops based at $v$ and $g_v = 0$.
\end{rmk}

\providecommand{\bysame}{\leavevmode\hbox to3em{\hrulefill}\thinspace}
\providecommand{\MR}{\relax\ifhmode\unskip\space\fi MR }
\providecommand{\MRhref}[2]{%
  \href{http://www.ams.org/mathscinet-getitem?mr=#1}{#2}
}
\providecommand{\href}[2]{#2}

\end{document}